\documentclass[smallextended,referee,envcountsect]{svjour3}
\usepackage [latin1]{inputenc}
\usepackage{amsmath,amssymb}
\usepackage{marvosym}
\usepackage{graphicx}
\usepackage{subcaption}
\usepackage{lipsum}
\usepackage{epstopdf}
\usepackage[numbers,sort&compress]{natbib}
\usepackage[colorlinks,linkcolor=blue,urlcolor=blue,citecolor=blue]{hyperref}

\usepackage{xcolor}

\smartqed
\usepackage{graphicx}
\journalname{}

\usepackage{fancyhdr}
\usepackage{booktabs}
	
\usepackage{ntheorem}
\theoremheaderfont{\bfseries\upshape}
\theorembodyfont{\upshape}
\renewtheorem{remark}{\it Remark}[section]
\renewtheorem{example}{Example}[section]
\renewtheorem{property}{Property}[section]
\renewtheorem{lemma}{Lemma}[section]
\newtheorem{assumption}{Assumption}[section]

\begin{document}

\title{Inertial dynamical systems and accelerated algorithms with implicit Hessian-driven damping for nonconvex optimization}

\author{Zeying Gao$^{1}$\and Xiangkai Sun$^{1}$\and Liang He$^{1,2}$}

\institute{Zeying Gao \at {\small gaozy\_6@163.com} \\
	\\Xiangkai Sun  (\Letter) \at{\small sunxk@ctbu.edu.cn } \\
          \\Liang He \at{\small liangheee@126.com}\\\\
               $^{1}$Chongqing Key Laboratory of Statistical Intelligent Computing and Monitoring, School of Mathematics and Statistics,
 Chongqing Technology and Business University,
Chongqing 400067, China.\\
\\
$^{2}$Department of Mathematics, Sichuan University, Chengdu 610064, Sichuan, China
}

\date{Received:   / Accepted:  }

\maketitle

\begin{abstract}
 This paper is devoted to the investigation of inertial dynamical systems with implicit Hessian-driven damping for strongly quasiconvex optimization which is a special class of nonconvex optimization problems. We first establish exponential convergence rate properties for this system without requiring  Lipschitz continuity of the gradient on the function. Then, we obtain an inertial accelerated algorithm for minimizing strongly quasiconvex functions through natural explicit time   discretization of the dynamical system. Meanwhile, we consider an exogenous additive perturbation term to this dynamical system and obtain the corresponding algorithm. By utilizing the Lyapunov method, we establish convergence rates of iterative sequences and their function values. Furthermore, we conduct numerical experiments to illustrate the theoretical results.
\end{abstract}
\keywords{Nonconvex optimization \and Implicit Hessian-driven damping \and Convergence rate \and Inertial accelerated algorithm}

\subclass{90C26 \and 37N40 \and 34D05}

\section{Introduction}
Let $\mathbb{R}^n$ be an $n$-dimensional Euclidean space and let $f: \mathbb{R}^n \rightarrow \mathbb{R}$ be a   continuously differentiable function. Consider the following optimization problem:
\begin{eqnarray}\label{min.f}
\min_{x \in \mathbb{R}^n} f(x).
\end{eqnarray}
Inertial dynamics method, as one of the powerful approaches for solving problem \eqref{min.f},
has attracted an increasing interest by many researchers from several different
perspectives.

In the case that the objective function $f$ is convex, Polyak \cite{polyak1964some} initially proposed the Heavy Ball method with a friction system:
\begin{eqnarray} \label{chzh}
\ddot{x}(t) + \alpha\dot{x}(t) + \nabla f (x(t)) = 0,
\end{eqnarray}
where $\alpha > 0$ is a constant damping coefficient. The Euler-forward discretization version of system (\ref{chzh}) yields the well-known Heavy Ball method given by
 \begin{align} \label{heavyball}
		\begin{array}{ll}
				x_{k+1} = x_{k} + \alpha (x_{k} - x_{k - 1}) - \beta \nabla f(x_{k}),
	\end{array}
		\tag{HBM}
\end{align}
where   $\alpha > 0$  is the momentum coefficient and $\beta > 0$ is step size. Unfortunately, in the case of a general convex function $f$, \eqref{heavyball}  only provides
a   convergence rate of values of order $\mathcal{O}(\frac{1}{k})$ as $k\rightarrow+\infty$.
Furthermore, \eqref{heavyball}  may lead to oscillatory behaviors in poorly conditioned or flat regions. In the quest for a faster theoretical convergence rate as well as greater stability,   Su et al. \cite{su2016differential} proposed the following dynamical system with an asymptotic vanishing damping coefficient $\frac{\alpha}{t}$:
\begin{eqnarray} \label{vanish}
	\ddot{x}(t) + \frac{\alpha}{t} \dot{x}(t) + \nabla f (x(t)) = 0,
\end{eqnarray}
where   $\alpha$ is a positive constant.
 When $\alpha \geq 3$,   they showed that  the convergence rate of the objective function value is $\mathcal{O}(\frac{1}{t^2})$ as $t$ approaches infinity. Furthermore, for $\alpha \leq 3$, Attouch et al. \cite{attc2019} derived  a convergence rate of $\mathcal{O}(t^{-\frac{2}{3}\alpha})$ for the objective function value along the trajectory generated by the system \eqref{vanish}.  Specifically, for $\alpha=3$, Su et al. \cite{su2016differential} demonstrated that the system \eqref{vanish} is a continuous  scheme  of Nesterov's accelerated gradient method \cite{nesterov1983method} as follows:
\begin{align} \label{NAG}
		x_{k+1} = x_{k} +  \alpha (x_{k} - x_{k - 1})- \beta \nabla f(x_{k} +  \alpha (x_{k} - x_{k - 1})).
	\tag{NAG}
\end{align}
 Note that,  when compared with \eqref{heavyball},  the gradient in \eqref{NAG} is evaluated at an extrapolated term of the
form $x_{k} +  \alpha (x_{k} - x_{k - 1})$. Moreover, \eqref{NAG} achieves the convergence rate of  values $\mathcal{O}(\frac{1}{k^2})$.
  For recent results on the asymptotic properties of the continuous and discrete dynamics in the critical case $\alpha = 3$, we refer the reader to \cite{attc2019,attouch2016fast,ai3, bot3, mulobj3}. 

Additionally, Attouch et al. \cite{attouch2018fast} also considered the perturbed version of \eqref{vanish}, that is,
\begin{eqnarray*} \label{vanish,raod}
	\ddot{x}(t) + \frac{\alpha}{t} \dot{x}(t) + \nabla f (x(t)) = \epsilon(t),
\end{eqnarray*}
where $\alpha \geq 3$ and $\epsilon: [t_0, + \infty) \rightarrow \mathbb{R}^{n}$ is an external perturbation function satisfying $ \int_{t_{0}}^{+\infty} t \| \epsilon(t)\| dt < +\infty$.   They showed that the convergence rate of the objective function value is $\mathcal{O}(\frac{1}{t^2})$ as $t $ approaches infinity, which complemented the rate obtained by Su et al. \cite{su2016differential} in the unperturbed case $\epsilon(t)=0$. Over the past few years, for solving problem \eqref{min.f}, a wide variety of works devoted to  the  system \eqref{vanish} and  its generalizations  have been  obtained from both continuous and discrete time perspectives. We refer the readers to some recent papers \cite{jems,van2017,van2018,tikhonov,s23jde}.

Considering another development on damping terms  and reducing the oscillations that occur during iterations of the Heavy Ball method, Alvarez et al.  \cite{alvarez2002second} proposed the following dynamical system with explicit Hessian-driven damping term $ \beta \nabla^2 f(x(t))\dot{x}(t)$:
\begin{eqnarray} \label{hess}
\ddot{x}(t) + \alpha\dot{x}(t) + \beta \nabla^2 f(x(t))\dot{x}(t) + \nabla f (x(t)) = 0,
\end{eqnarray}
where $\alpha > 0$ and $\beta > 0$. They also established convergence properties of the trajectory generated by system \eqref{hess} under different assumptions on $f$. Subsequently, Attouch et al. \cite{attouch2016fast} introduced the following  dynamical system with asymptotic vanishing damping and Hessian-driven damping terms:
\begin{eqnarray}\label{ga123}
	\ddot{x}(t) + \frac{\alpha}{t}\dot{x}(t) + \beta \nabla^2 f(x(t))\dot{x}(t) + \nabla f (x(t)) = 0,
\end{eqnarray}
where $\alpha > 0$ and $\beta > 0$. In case $\alpha > 3$, they obtained the weak convergence of the trajectory to a minimizer of problem \eqref{min.f} and an $o(\frac{1}{t^2})$ convergence rate for the objective function value along the trajectory. Nowadays, a series of studies on inertial dynamical systems and accelerated algorithms with Hessian-driven damping have garnered considerable attention,  see \cite{firstorder, attouch2021effect, forback, aujo23, tikhonovhes, newton, he25, c25, kl, wangtx, zhong24}.

 It is worth noting that the Heavy Ball method and its variants cannot yield the Nesterov's algorithm through straightforward implicit/explicit time discretization of dynamical systems with explicit Hessian-driven damping term (e.g., systems \eqref{hess} and \eqref{ga123}). In order to overcome this limitation, Alecsa et al. \cite{alecsa2021extension} proposed the following inertial dynamical system with an implicit Hessian-driven damping term: 
\begin{eqnarray} \label{vanish,imhes}
	\ddot{x}(t) + \frac{\alpha}{t} \dot{x}(t) + \nabla f \left( x(t)+\left( \gamma+\frac{\beta}{t} \right) \dot{x}(t) \right) = 0,
\end{eqnarray}
where $\alpha > 0$,  $\beta \in \mathbb{R}$ and $\gamma \geq 0$.  Note that  by applying the Taylor expansion to $\nabla f $ at $x(t)$, one has
$
	\nabla f \left( x(t)+\left( \gamma+\frac{\beta}{t} \right) \dot{x}(t) \right) \approx \nabla f(x(t)) + \left( \gamma+\frac{\beta}{t} \right) \nabla^2 f(x(t)) \dot{x}(t).
$
Thus, $\nabla f \left( x(t)+\left( \gamma+\frac{\beta}{t} \right) \dot{x}(t) \right)$ is referred to as  the implicit Hessian-driven damping term. 
Although there exists a structural similarity between the systems \eqref{ga123} and \eqref{vanish,imhes}, the trajectory generated by system \eqref{vanish,imhes} achieved better convergence performance. Further, by explicitly discretizing system  \eqref{vanish,imhes} in time, a gradient-type algorithm can be directly obtained, which further shows the advantages of the introduction of implicit Hessian-driven damping terms. However, in contrast to systems \eqref{hess} and \eqref{ga123}, the system \eqref{vanish,imhes} and its variants have been less explored in the literature. This is also the first motivation for the investigation of inertial dynamical system with implicit Hessian-driven damping term in this paper.

On the other hand, we observe that both continuous-time dynamical systems and discrete-time accelerated methods mentioned above are designed for problem \eqref{min.f} under convexity conditions. From a dynamical system perspective, it appears that there exists a few papers in the literature devoted to the study of algorithms for problem \eqref{min.f} with a nonconvex objective function. Among them, a constructive strategy involves the study of generalized convex functions, such as quasiconvex functions \cite{cambini2009generalized, quas16, prom,  hadjisavvas2006handbook, qusc19, convexbook}. However, the convergence was established only to critical points under a quasiconvexity assumption, see \cite{quasiconvex2009, quasiconvex2021}. In order to avoid such behaviors, some researchers have focused on the study of problem \eqref{min.f} with the strongly quasiconvex function which was initially introduced by Polyak \cite{poracka1971existence}. Note that the class of strongly quasiconvex functions not only preserves many favorable properties of convex functions but also encompasses some significant nonconvex functions \cite{sofar, korablev1989relaxation, lara2022strongly, nam2024strong}. Consequently, strongly quasiconvex functions are of great value in both theoretical analysis and practical applications. Some important steps in this direction have been made in \cite{hadjisavvas2025heavy, lara2024characterizations}. More precisely, for problem \eqref{min.f} with strongly quasiconvex objective functions, Lara et al. \cite{lara2024characterizations} established exponential convergence rates for system \eqref{chzh} without requiring $\nabla f$ to be Lipschitz continuous. Via explicit time   discretization of the system \eqref{chzh}, they also proposed a Heavy Ball accelerated algorithm which enjoys a linear convergence rate. Further, Hadjisavvas et al. \cite{hadjisavvas2025heavy} established exponential convergence rates for system \eqref{hess} in the context of strongly quasiconvex functions. Through temporal discretization of the system \eqref{hess}, they obtained a Heavy Ball method   \eqref{heavyball} with Hessian correction:
\begin{align}\label{heavyballhess}
	\left\{
	\begin{array}{ll}
		y_{k} = x_{k} + \alpha (x_{k} - x_{k - 1}) - \theta \left(\nabla f(x_k) -
		\nabla  f(x_{k-1})\right), \\ [2mm]
		x_{k+1} = y_{k} - \beta \nabla f(x_{k}),
	\end{array}
	\right.
	\tag{HBM-H}
\end{align}
where $\alpha \in [0,1]$, $\theta \geq 0$, $\beta > 0$.   When $\nabla f(x_{k})$ in \eqref{heavyballhess} is replaced by $\nabla f(y_{k})$,   they also  obtained a Nesterov's accelerated gradient method   \eqref{NAG} with Hessian correction:
\begin{align}\label{NAGhess}
	\left\{
	\begin{array}{ll}
		y_{k} = x_{k} + \alpha (x_{k} - x_{k - 1}) - \theta \left(\nabla f(x_k) -
		\nabla  f(x_{k-1})\right), \\ [2mm]
		x_{k+1} = y_{k} - \beta \nabla f(y_{k}).
	\end{array}
	\right.
	\tag{NAG-H}
\end{align}
 Then, they established the linear convergence rate of algorithms \eqref{heavyballhess} and \eqref{NAGhess}, which further enriched the applications of strongly quasiconvex functions in nonconvex optimization problems.

Motivated by the works reported in \cite{alecsa2021extension, hadjisavvas2025heavy, lara2024characterizations}, in this paper, we further consider the following dynamical system with implicit Hessian-driven damping:
\begin{align} \label{system1}
	\begin{array}{ll}
		\ddot{x}(t) + \alpha\dot{x}(t) + \nabla f \left( x(t) + \beta\dot{x}(t) \right) = 0,
	\end{array}
\end{align}
where  $\alpha > 0$, $\beta \geq 0$ and $f: \mathbb{R}^{n} \rightarrow \mathbb{R}$ is a $\gamma$-strongly quasiconvex function with modulus $\gamma > 0$. We also consider the  system \eqref{system1}   with external perturbations:
\begin{align}\label{system2}
	\begin{array}{ll}
		\ddot{x}(t) + \alpha\dot{x}(t) + \nabla f \left(x(t) + \beta\dot{x}(t) \right) = \epsilon(t),
	\end{array}
\end{align}
where $\epsilon: [t_0, + \infty) \rightarrow \mathbb{R}^{n}$ serves as an external perturbation which satisfies $\int_{t_{0}}^{+\infty} \|\epsilon(t)\|^2 dt < +\infty$. Specifically, we demonstrate that the systems \eqref{system1} and \eqref{system2} can be explicitly discretized to obtain the corresponding inertial accelerated algorithms. The contributions of this paper can be more specifically stated as follows:
\begin{enumerate}
	\item[{\rm (i)}] \textup{Under some mild conditions on the parameters, we establish exponential convergence rates of   the objective
function value error, the trajectory and the velocity vector for the dynamical system with implicit Hessian-driven damping \eqref{system1}, without requiring strongly convexity and Lipschitz continuous   gradient assumptions on the objective function.}
	\item[{\rm (ii)}]\textup{By temporal discretization of the systems \eqref{system1} and \eqref{system2}, we obtain inertial accelerated algorithm \eqref{algo} and its externally perturbed version \eqref{algo-error}, respectively. We also establish convergence rates of both algorithms under suitable conditions on the parameters.}
	\item[{\rm (iii)}]\textup{Through numerical experiments, our algorithm \eqref{algo}  is shown to exhibit faster convergence and less oscillations than the existing algorithms: \eqref{heavyball}, \eqref{NAG}, \eqref{heavyballhess}, and \eqref{NAGhess}.}
\end{enumerate}

The rest of the paper is organized as follows. In Section 2, we recall some basic notions and present some preliminary results. In Section 3, we investigate the asymptotic properties of the inertial dynamical systems \eqref{system1} and   \eqref{system2}. In Section 4, we analyze the convergence rates of the algorithms obtained by the temporal discretization of  \eqref{system1} and \eqref{system2}. In Section 5, we give some numerical experiments to illustrate the effectiveness of our algorithms.

\section{Preliminaries}

Throughout this paper, let $\mathbb{R}^n$ be an $n$-dimensional Euclidean space with inner product $\langle \cdot,\cdot \rangle$ and norm $\lVert \cdot \rVert$. Let $\psi: \mathbb{R}^{n} \rightarrow \mathbb{R}$  be a continuous differentiable function such that $\nabla \psi$ is $L$-Lipschitz continuous for $L>0.$ Then, it holds  (see \cite[Theorem 2.1.5]{convexbook}):
\begin{equation}  \label{L}
	\psi(y) \leq \psi(x) + \langle \nabla \psi(x), y - x \rangle + \frac{L}{2} \|x - y \|^{2},~\forall x, y \in \mathbb{R}^{n}.
\end{equation}
For an extended real-valued function $f: \mathbb{R}^n \rightarrow \mathbb{R}\cup\{+\infty\}$, we define the domain and epigraph of $f$ by
\begin{eqnarray*}
	{\rm dom}\,f:=\left\{x\in\mathbb{R}^n\mid f(x) <+\infty\right\}~\text{and}~{\rm epi}\,f:=\left\{(x, \mu)\in\mathbb{R}^n\times\mathbb{R}~|~\mu\geq f(x)\right\},
\end{eqnarray*}
respectively. The function $f$ is said to be proper iff ${\rm dom}\,f$ is nonempty, and $f$ is said to be convex iff ${\rm epi}\,f$ is a convex set.
Moreover, $f$ is said to be lower semicontinuous iff ${\rm epi}\,f$ is closed. We say that $f$ with convex domain is $\gamma$-strongly quasiconvex with modulus $\gamma > 0$ iff for any $x, y \in {\rm dom}\,f$ and $\lambda \in [0,1]$,
\begin{equation*}\label{def:s qcx}
	f(\lambda y + (1-\lambda)x) \leq \max \{f(y), f(x)\} - \lambda(1 - \lambda) \frac{\gamma}{2} \| x - y \|^{2}.
\end{equation*}
Obviously, every strongly convex function is strongly quasiconvex function. The reverse statement does not hold in general, see \cite{cambini2009generalized, hadjisavvas2006handbook, lara2022strongly}. Further, if $K \subseteq {\rm dom}\,f$ is a closed convex set and $f: K \rightarrow {\mathbb{R}}\cup\{+\infty\}$ is a proper, lower semicontinuous and $\gamma$-strongly quasiconvex function with modulus $\gamma> 0$. Then, ${\rm argmin}_{K} f$ is a singleton, please see \cite[Corollary 3]{lara2022strongly}. For more details on the relationship between strong quasiconvexity and other kinds of convexity, we refer the readers to \cite{cambini2009generalized, sofar, hadjisavvas2006handbook, lara2024characterizations}.

Now, we recall the following important properties of the  strongly quasiconvexity which will be used in the sequel.
\begin{lemma} \label{4} (\cite[Theorem 5.1]{nam2024strong})
	Suppose that $f$ is lower semicontinuous and $\gamma$-strongly quasiconvex on a closed convex set $K \subseteq \mathbb{R}^{n}$. Then, for $x^{*} = {\rm argmin}_{K}\,f$, we have
	\begin{equation*}
		f(x^{*}) + \frac{\gamma}{4} \|x - x^{*}\|^2 \leq f(x), ~\forall x \in K.
	\end{equation*}
\end{lemma}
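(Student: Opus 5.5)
The approach is to apply the definition of $\gamma$-strong quasiconvexity along the segment joining $x^{*}$ to an arbitrary $x \in K$, and then let the interpolation parameter go to zero. Fix $x \in K$. Since $K$ is convex, $z_\lambda := \lambda x^{*} + (1-\lambda)x \in K$ for every $\lambda\in[0,1]$. We may assume $f(x)<+\infty$, since otherwise the claim is trivial; note that $f(x^*)<+\infty$ because $f$ is proper and $x^*$ is a minimizer. As $x^{*}$ minimizes $f$ on $K$, we have $\max\{f(x^{*}),f(x)\}=f(x)$. Applying the definition with $y=x^{*}$ then gives
\begin{equation*}
f(x^{*}) \le f(z_\lambda) \le f(x) - \lambda(1-\lambda)\frac{\gamma}{2}\|x-x^{*}\|^2 .
\end{equation*}
Taking $\lambda=1/2$ already yields the estimate with constant $\gamma/8$. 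To reach $\gamma/4$ we need to exploit the inequality more carefully.

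The plan is to iterate along the segment, using the $\lambda(1-\lambda)$ factor at each stage. Take $\lambda$ close to $1$, so that $z_\lambda$ lies near $x^{*}$, and apply strong quasiconvexity to the pair $(z_\lambda, x)$ at the midpoint $w$ of $[z_\lambda,x]$. Since $f(z_\lambda)\le f(x)$, this gives
\begin{equation*}
f(w)\le f(x)-\frac{\gamma}{8}(1-\lambda)^2\|x-x^*\|^2 ,
\end{equation*}
which by itself is not better than the $\lambda=1/2$ bound. A cleaner route is a recursive (dyadic) argument. Set $d=\|x-x^*\|$ and let $m(s)$ be the best constant such that $f(x)-f(x^*)\ge m(s)\,\gamma d^2$ holds for all points with $\|x-x^*\|=d$. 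Put $x_1$ at the midpoint of $[x^*,x]$. Strong quasiconvexity gives
\begin{equation*}
f(x)\ge f(x_1)+\frac{\gamma}{8}d^2 ,
\end{equation*}
and applying the same bound recursively to $x_1$, whose distance to $x^*$ is $d/2$, yields
\begin{equation*}
f(x)-f(x^*)\ge \frac{\gamma}{8}d^2\left(1+\frac14+\frac1{16}+\cdots\right)=\frac{\gamma}{8}\cdot\frac43\, d^2=\frac{\gamma}{6}d^2 .
\end{equation*}
This is an improvement, but still not $\gamma/4$. Optimizing instead over a general ratio $\lambda$ in the recursion gives
\begin{equation*}
f(x)-f(x^*)\ge \frac{\gamma}{2}\lambda(1-\lambda)\frac{d^2}{1-(1-\lambda)^2} = \frac{\gamma}{2}\cdot\frac{1-\lambda}{2-\lambda}\,d^2 ,
\end{equation*}
which tends to $\gamma/4$ as $\lambda\to0^+$. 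Here $(1-\lambda)d$ is the distance from the new point $z_\lambda=\lambda x^*+(1-\lambda)x$ to $x^*$.

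Concretely, define $x_0=x$ and $x_{k+1}=\lambda x^{*}+(1-\lambda)x_k$, so that $\|x_k-x^*\|=(1-\lambda)^k d$. Each step satisfies
\begin{equation*}
f(x_k)-f(x_{k+1})\ge \frac{\gamma}{2}\lambda(1-\lambda)(1-\lambda)^{2k}d^2 ,
\end{equation*}
because $f(x_k)\ge f(x^*)$ gives $\max\{f(x^*),f(x_k)\}=f(x_k)$. Telescoping up to step $N$ gives
\begin{equation*}
f(x)-f(x_N)\ge \frac{\gamma}{2}\lambda(1-\lambda)\frac{1-(1-\lambda)^{2N}}{1-(1-\lambda)^2}\,d^2 .
\end{equation*}

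The main obstacle is passing to the limit $N\to\infty$. We need $\liminf_N f(x_N)\ge f(x^{*})$, and this holds trivially because $x^{*}$ is the minimizer, so $f(x_N)\ge f(x^*)$ for every $N$. Hence lower semicontinuity is not really needed here, beyond guaranteeing that $x^*$ exists. Letting $N\to\infty$ and then $\lambda\to0^+$ gives $f(x)-f(x^*)\ge\frac{\gamma}{4}\|x-x^*\|^2$, which is the statement of Lemma \ref{4}.
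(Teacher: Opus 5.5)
The paper gives no proof of this lemma (it is imported from \cite{nam2024strong}), so there is no in-text argument to compare against. Judged on its own, the concrete argument in your last two paragraphs is a complete and correct proof. The iterates $x_{k+1}=\lambda x^*+(1-\lambda)x_k$ stay on the segment $[x^*,x]$. The maximum in the definition is $f(x_k)$ because $x^*$ is a global minimizer on $K$. The geometric sum is computed correctly. Since $f(x)-f(x^*)\ge\frac{\gamma}{2}\cdot\frac{1-\lambda}{2-\lambda}\|x-x^*\|^2$ holds for every fixed $\lambda\in(0,1)$, letting $\lambda\to0^+$ is legitimate. You are also right that lower semicontinuity only serves to guarantee that $x^*$ exists. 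Beyond that, the argument uses nothing but the definition and homogeneity of the norm, so it holds verbatim in any normed space.

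It helps to see where $\gamma/4$ comes from. With $d=\|x-x^*\|$ and $t_k=(1-\lambda)^k$, your $k$-th decrement bound is $\frac{\gamma}{2}d^2\,t_{k+1}(t_k-t_{k+1})$. So the telescoped sum is a lower Riemann sum for $\frac{\gamma}{2}d^2\int_0^1 t\,dt=\frac{\gamma}{4}d^2$ on a geometric partition of mesh $\lambda$. Any partition with vanishing mesh works the same way. For instance, take $x_i=x^*+\frac{i}{n}(x-x^*)$ and apply the definition to $x^*$ and $x_{i+1}$ with weight $\frac{1}{i+1}$ on $x^*$. This gives $f(x_{i+1})-f(x_i)\ge\frac{\gamma}{2}\cdot\frac{i}{n^2}d^2$, hence the constant $\frac{\gamma}{2}\cdot\frac{n-1}{2n}\to\frac{\gamma}{4}$, without an infinite telescoping.

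The limit is unavoidable. For each fixed $\lambda$ (or $n$) the bound is strictly below $\gamma/4$, while $\gamma/4$ itself cannot be improved on a general closed convex $K$: $g(t)=\frac{\gamma}{4}t^2$ is $\gamma$-strongly quasiconvex on $K=[0,+\infty)$ and gives equality.

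In a final write-up, drop the exploratory paragraphs. They contain a slip: $\|z_\lambda-x\|=\lambda\|x-x^*\|$, so the midpoint bound should carry $\lambda^2$, not $(1-\lambda)^2$. They also use an undefined $m(s)$, and rely on a fixed-point heuristic that becomes rigorous only through the telescoping step.
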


\begin{lemma} (\cite[Theorems 2 and 6]{vladimirov1978uniformly})
	Let $K \subseteq \mathbb{R}^{n}$ be a convex set and $f: K \rightarrow {\mathbb{R}}$ be a differentiable function. Then, $f$ is $\gamma$-strongly quasiconvex function with modulus $\gamma>0$ if and only if for any $x, y \in K$,
	\begin{equation} \label{x<y}
		f(x) \leq f(y) ~ \Longrightarrow ~ \langle \nabla f(y), x - y \rangle
		\leq -\frac{\gamma}{2} \| y - x \|^{2}.
	\end{equation}
\end{lemma}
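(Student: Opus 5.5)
Both implications will be proved directly from the definition of $\gamma$-strong quasiconvexity, using only differentiability of $f$.

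\emph{Necessity.} Suppose $f$ is $\gamma$-strongly quasiconvex and $f(x)\le f(y)$ for some $x,y\in K$. For $\lambda\in(0,1]$, the definition with the roles arranged so that $y+\lambda(x-y)=\lambda x+(1-\lambda)y\in K$ gives
\[
f(y+\lambda(x-y))\le \max\{f(x),f(y)\}-\lambda(1-\lambda)\tfrac{\gamma}{2}\|x-y\|^2 = f(y)-\lambda(1-\lambda)\tfrac{\gamma}{2}\|x-y\|^2 .
\]
We subtract $f(y)$, divide by $\lambda>0$, and let $\lambda\downarrow 0$. Differentiability of $f$ at $y$ turns the left-hand side into $\langle\nabla f(y),x-y\rangle$, and the right-hand side tends to $-\frac{\gamma}{2}\|x-y\|^2$. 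This is exactly \eqref{x<y}.

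\emph{Sufficiency.} This is the harder direction. Fix $x,y\in K$ and set $z_\lambda=\lambda y+(1-\lambda)x$. Consider
\[
\varphi(\lambda)=f(z_\lambda)-\max\{f(x),f(y)\}+\lambda(1-\lambda)\tfrac{\gamma}{2}\|x-y\|^2,\qquad \lambda\in[0,1].
\]
The function $\varphi$ is differentiable, $\varphi(0)\le 0$ and $\varphi(1)\le 0$, and the goal is $\varphi\le 0$ on $[0,1]$. Argue by contradiction: assume $\varphi(\bar\lambda)>0$ for some $\bar\lambda$. Let $(a,b)$ be the maximal interval containing $\bar\lambda$ on which $\varphi>0$, so that $\varphi(a)=\varphi(b)=0$ by continuity. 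For every $\lambda\in(a,b)$ we have $f(z_\lambda)>\max\{f(x),f(y)\}$, so both $f(x)\le f(z_\lambda)$ and $f(y)\le f(z_\lambda)$ hold. Apply \eqref{x<y} with base point $z_\lambda$ twice. Since $x-z_\lambda=-\lambda(y-x)$ and $y-z_\lambda=(1-\lambda)(y-x)$, this gives
\[
-\lambda\langle\nabla f(z_\lambda),y-x\rangle\le-\tfrac{\gamma}{2}\lambda^2\|y-x\|^2,\qquad (1-\lambda)\langle\nabla f(z_\lambda),y-x\rangle\le-\tfrac{\gamma}{2}(1-\lambda)^2\|y-x\|^2 .
\]
Dividing by $\lambda$ and by $1-\lambda$ respectively yields
\[
\tfrac{\gamma}{2}\lambda\|y-x\|^2\le \langle\nabla f(z_\lambda),y-x\rangle\le -\tfrac{\gamma}{2}(1-\lambda)\|y-x\|^2 .
\]
Summing the two outer bounds gives $\frac{\gamma}{2}\|y-x\|^2\le 0$, which is impossible when $x\ne y$. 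Hence $(a,b)$ is empty, a contradiction, and the case $x=y$ is trivial.

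The step needing care is getting both $f(x)\le f(z_\lambda)$ and $f(y)\le f(z_\lambda)$ at the same point. This is where taking $\varphi>0$ with the full quadratic term helps: it forces $f(z_\lambda)$ strictly above the maximum, so \eqref{x<y} applies in both directions and the two bounds collide. We also need $\lambda\in(0,1)$ to divide; this holds because $a\ge 0$, $b\le 1$, and the interval is open.
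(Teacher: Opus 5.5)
Your necessity argument is correct. The paper itself gives no proof of this lemma; it only cites Vladimirov--Nesterov--Chekanov, so there is nothing in-paper to compare against.

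The sufficiency argument has a genuine gap at the sentence ``For every $\lambda\in(a,b)$ we have $f(z_\lambda)>\max\{f(x),f(y)\}$.'' The quadratic term enters $\varphi$ with a plus sign. So $\varphi(\lambda)>0$ only gives $f(z_\lambda)>\max\{f(x),f(y)\}-\lambda(1-\lambda)\frac{\gamma}{2}\|x-y\|^2$, which is \emph{weaker} than $f(z_\lambda)>\max\{f(x),f(y)\}$; your closing remark has the direction reversed. What your two-sided collision actually proves is that $f(z_\lambda)\ge\max\{f(x),f(y)\}$ is impossible for $\lambda\in(0,1)$ and $x\neq y$. That is strict quasiconvexity, not the lemma. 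The quantitative content lies entirely in the band $\max\{f(x),f(y)\}-\lambda(1-\lambda)\frac{\gamma}{2}\|x-y\|^2<f(z_\lambda)<\max\{f(x),f(y)\}$. There at most one of $f(x)\le f(z_\lambda)$, $f(y)\le f(z_\lambda)$ holds, so you get only a one-sided bound on $\langle\nabla f(z_\lambda),y-x\rangle$ and nothing collides.

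To close the gap you must apply the hypothesis with comparison points other than $x$ and $y$. Here is one way.
\begin{itemize}
\item Set $g(t)=f(x+t(y-x))$ and $\omega=\frac{\gamma}{2}\|x-y\|^2>0$. The hypothesis then reads $g(s)\le g(t)\Rightarrow(s-t)g'(t)\le-\omega(s-t)^2$ on $[0,1]$.
\item For $\epsilon\in(0,\frac12)$ put $G_\epsilon(t)=g(t)+(1-\epsilon)\omega t(1-t)$, and suppose $\max_{[0,1]}G_\epsilon>\max\{g(0),g(1)\}$. A maximizer $\lambda^*$ is then interior, so $g'(\lambda^*)=(1-\epsilon)\omega(2\lambda^*-1)$.
\item If $\lambda^*>\frac12$, let $a^*$ be the least $s\in[0,\lambda^*]$ with $g(s)\le g(\lambda^*)$. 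Since $g'(\lambda^*)>0$, we have $a^*<\lambda^*$, and the hypothesis gives $g'(\lambda^*)\ge\omega(\lambda^*-a^*)$.
\begin{itemize}
\item If $a^*=0$, this forces $\lambda^*\ge\frac{1-\epsilon}{1-2\epsilon}>1$, which is impossible.
\item If $a^*>0$, then $g(a^*)=g(\lambda^*)$. So $G_\epsilon(a^*)\le G_\epsilon(\lambda^*)$ yields $a^*+\lambda^*\le 1$, hence $\omega(\lambda^*-a^*)\ge\omega(2\lambda^*-1)>g'(\lambda^*)$, a contradiction.
\end{itemize}
\item The case $\lambda^*<\frac12$ is the mirror image.
\item If $\lambda^*=\frac12$, then $g'(\frac12)=0$ and the hypothesis make $\frac12$ the strict minimizer of $g$. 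Let $\delta=g-g(\frac12)$; maximality gives $\delta(s)\le(1-\epsilon)\omega(s-\frac12)^2$ for all $s$. For $t>\frac12$ close to $\frac12$, compare with the point $a<\frac12$ where $g(a)=g(t)$. This gives $\delta'(t)\ge\omega(t-a)\ge 2\sqrt{\omega\delta(t)/(1-\epsilon)}$. Integrating yields $\delta(t)\ge\frac{\omega}{1-\epsilon}(t-\frac12)^2>(1-\epsilon)\omega(t-\frac12)^2$, again a contradiction.
\end{itemize}
Thus $G_\epsilon\le\max\{g(0),g(1)\}$ on $[0,1]$, and letting $\epsilon\downarrow 0$ gives exactly the defining inequality of $\gamma$-strong quasiconvexity.
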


 Let $f: \mathbb{R}^n \to \mathbb{R}$ be a differentiable function and $x^{*} \in \arg\min_{x \in \mathbb{R}^n} f$. Following  \cite{Lojasiewicz1963,PLpolyak}, we say that  $f$   satisfies the Polyak-{\L}ojasiewicz property iff  there exists a constant $\eta > 0$ such that for any $ x \in \mathbb{R}^n$,  $\|\nabla f(x)\|^2 \ge 2\eta \left( f(x) - f(x^{*}) \right).$ 

\begin{lemma} \label{PL} (\cite[Theorem 2]{korablev1989relaxation})
	Let $f: \mathbb{R}^{n} \rightarrow \mathbb{R}$ be a $\gamma$-strongly quasiconvex function with modulus $\gamma > 0$ and differentiable with $L$-Lipschitz continuous gradient with modulus $L>0$.   Then, for $x^{*} = {\rm argmin}_{\mathbb{R}^n}\, f$,  the Polyak-{\L}ojasiewicz property holds with modulus $\eta := \frac{\gamma^{2}}{4L}$,  that is,
	\begin{equation*}
		\| \nabla f(x) \|^{2} \geq \frac{\gamma^{2}}{2L} (f(x) - f(x^{*})),~\forall x \in \mathbb{R}^n.
	\end{equation*}
\end{lemma}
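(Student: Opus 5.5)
Fix $x\in\mathbb{R}^n$; we want to bound $\|\nabla f(x)\|^2$ from below by a multiple of $f(x)-f(x^*)$, and we may assume $x\neq x^*$. The plan is to combine an upper bound on the suboptimality gap, coming from the descent lemma, with a lower bound on the gradient norm, coming from the first-order characterization of strong quasiconvexity. Lemma \ref{4} will supply the quadratic growth needed to link the two.

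\emph{Upper bound on the gap.} Since $\nabla f(x^*)=0$, inequality \eqref{L} with the base point at $x^*$ gives
\[
f(x)-f(x^*)\le \frac{L}{2}\|x-x^*\|^2 .
\]

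\emph{Lower bound on the gradient.} Because $f(x^*)\le f(x)$, we can apply the characterization \eqref{x<y} with the pair $(x^*,x)$. This yields
\[
\langle \nabla f(x), x^*-x\rangle\le -\frac{\gamma}{2}\|x-x^*\|^2 .
\]
By Cauchy--Schwarz, $\|\nabla f(x)\|\,\|x-x^*\|\ge \frac{\gamma}{2}\|x-x^*\|^2$, and hence $\|\nabla f(x)\|\ge \frac{\gamma}{2}\|x-x^*\|$.

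\emph{Combining the two.} Squaring the gradient bound gives
\[
\|\nabla f(x)\|^2\ge \frac{\gamma^2}{4}\|x-x^*\|^2 .
\]
Substituting the gap bound in the form $\|x-x^*\|^2\ge \frac{2}{L}\bigl(f(x)-f(x^*)\bigr)$, we obtain
\[
\|\nabla f(x)\|^2\ge \frac{\gamma^2}{2L}\bigl(f(x)-f(x^*)\bigr),
\]
which is the stated PL inequality with $\eta=\gamma^2/(4L)$.

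The one point that needs care is that $x^*$ exists and is a critical point. Existence and uniqueness follow from the singleton result \cite[Corollary 3]{lara2022strongly} with $K=\mathbb{R}^n$, which applies because $f$ is continuous. Since $x^*$ is an unconstrained minimizer of a differentiable function, $\nabla f(x^*)=0$, which justifies the first step. I expect no other real obstacle, since the remaining steps are direct applications of \eqref{L} and \eqref{x<y}.
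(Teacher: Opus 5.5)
Your argument is correct and complete. The paper gives no proof of this lemma; it is imported from \cite[Theorem 2]{korablev1989relaxation}, so there is no in-paper argument to compare against. Your derivation makes the lemma self-contained using only \eqref{L} at the critical point $x^*$ and the forward implication of \eqref{x<y}. It also shows where the constant comes from: $\frac{\gamma^2}{2L}=\left(\frac{\gamma}{2}\right)^2\cdot\frac{2}{L}$.

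The same two inequalities underlie item (iii) of the remark following Assumption \ref{assume}, where the paper asserts $\kappa=\gamma/L$. There one combines $\langle \nabla f(x), x-x^*\rangle\ge\frac{\gamma}{2}\|x-x^*\|^2$ with the descent bound directly, rather than first passing through Cauchy--Schwarz.

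One small correction to your outline: Lemma \ref{4} plays no role, and it could not serve as the link. It bounds $f(x)-f(x^*)$ from below by $\frac{\gamma}{4}\|x-x^*\|^2$, whereas the PL inequality needs an upper bound on the gap. That upper bound is supplied by \eqref{L} at $x^*$, so drop the sentence announcing Lemma \ref{4} from your plan.
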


The following   important properties will be used in the sequel.
\begin{lemma} \label{7} (\cite[Lemma A3]{tikhonov})
	Suppose that $\delta > 0$, $\phi \in L^1(\delta, +\infty)$ is a nonnegative and continuous function, and $\varphi: [t_0, +\infty) \to [0, +\infty)$ is a  nondecreasing function such that $\lim\limits_{t \to +\infty} \varphi(t) = +\infty $. Then, $$\lim\limits_{t \to +\infty} \frac{1}{\varphi(t)} \int_{t_0}^t \varphi(s) \phi(s) ds = 0. $$
\end{lemma}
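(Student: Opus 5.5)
The plan is a standard ``split the integral'' argument, in the spirit of Kronecker's lemma for series. Fix $\varepsilon>0$. Since $\phi\in L^1(\delta,+\infty)$ is nonnegative, its tail integrals tend to zero, so I will choose $T\geq \max\{t_0,\delta\}$ with
\begin{equation*}
\int_T^{+\infty}\phi(s)\,ds\leq \varepsilon .
\end{equation*}
For $t\geq T$ I will then split
\begin{equation*}
\frac{1}{\varphi(t)}\int_{t_0}^t \varphi(s)\phi(s)\,ds=\frac{1}{\varphi(t)}\int_{t_0}^T \varphi(s)\phi(s)\,ds+\frac{1}{\varphi(t)}\int_{T}^t \varphi(s)\phi(s)\,ds ,
\end{equation*}
and treat the two pieces separately. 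Throughout, we may take $t$ large enough that $\varphi(t)>0$, which is possible since $\varphi(t)\to+\infty$.

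For the second piece I will use monotonicity of $\varphi$. For $s\in[T,t]$ we have $0\leq\varphi(s)\leq\varphi(t)$, and $\phi\geq 0$, so
\begin{equation*}
\frac{1}{\varphi(t)}\int_T^t\varphi(s)\phi(s)\,ds\leq\int_T^t\phi(s)\,ds\leq\varepsilon .
\end{equation*}
For the first piece, the quantity $C_T:=\int_{t_0}^T\varphi(s)\phi(s)\,ds$ is a finite constant independent of $t$. Indeed, on the compact interval $[t_0,T]$ the function $\phi$ is continuous, hence bounded, and $\varphi$ is monotone with values between $\varphi(t_0)$ and $\varphi(T)$, hence bounded and Riemann integrable. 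This remains true even if $t_0<\delta$, because only continuity of $\phi$ on the compact interval is used there. Since $\varphi(t)\to+\infty$, we get $C_T/\varphi(t)\to 0$.

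Combining the two bounds gives
\begin{equation*}
\limsup_{t\to+\infty}\frac{1}{\varphi(t)}\int_{t_0}^t\varphi(s)\phi(s)\,ds\leq\varepsilon .
\end{equation*}
The expression is nonnegative, and $\varepsilon>0$ was arbitrary, so the limit exists and equals $0$. There is no real obstacle in this argument. The only points needing care are the placement of $T$ beyond both $t_0$ and $\delta$, so that the tail estimate uses the integrability hypothesis, and the finiteness of the initial segment $C_T$, which follows from continuity of $\phi$ and boundedness of the monotone $\varphi$ on compacts.
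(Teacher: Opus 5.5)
Your argument is correct. It is essentially the standard proof from the cited reference (the paper itself only quotes the lemma): choose $T$ with $\int_T^{+\infty}\phi(s)\,ds\le\varepsilon$, bound the piece over $[T,t]$ by $\varepsilon$ using $\varphi(s)\le\varphi(t)$, and let $\varphi(t)\to+\infty$ kill the fixed initial piece over $[t_0,T]$. Taking $T\ge\max\{t_0,\delta\}$ and using continuity of $\phi$ on $[t_0,T]$ also correctly handles the $\delta$ versus $t_0$ mismatch in the paper's restatement of the hypotheses.
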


\begin{lemma} \label{jishu}
	Let $0 < \theta < 1$ and $q > 0$. Let $\{a_i\} \subset \mathbb{R}$ be a sequence satisfying $a_i = \mathcal{O} (\frac{1}{i^q})$, as $i \rightarrow +\infty$. Then, $S_k := \sum_{i=1}^{k}\theta^{k-i} a_i = \mathcal{O} (\frac{1}{k^q})$, as $k \rightarrow +\infty$.
\end{lemma}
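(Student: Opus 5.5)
Since $a_i=\mathcal{O}(i^{-q})$, there are $C>0$ and $i_0\ge 1$ with $|a_i|\le C i^{-q}$ for all $i\ge i_0$. The finitely many terms $i<i_0$ contribute at most $\theta^{k}\sum_{i<i_0}\theta^{-i}|a_i| = M\theta^k$ to $|S_k|$. Because $\theta^k k^q\to 0$, this part is $o(k^{-q})$. So it suffices to show that
\[
T_k:=\sum_{i=1}^{k}\theta^{k-i} i^{-q}=\mathcal{O}(k^{-q}).
\]

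To bound $T_k$, split the sum at $i=\lceil k/2\rceil$.

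For $i\ge k/2$ we have $i^{-q}\le 2^q k^{-q}$. The geometric tail then gives
\[
\sum_{k/2\le i\le k}\theta^{k-i}i^{-q}\le 2^q k^{-q}\sum_{j\ge 0}\theta^j=\frac{2^q}{1-\theta}\,k^{-q}.
\]

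For $1\le i<k/2$ we have $k-i>k/2$ and $i^{-q}\le 1$. Hence
\[
\sum_{i<k/2}\theta^{k-i}i^{-q}\le \frac{k}{2}\,\theta^{k/2}.
\]
Since $\theta^{1/2}<1$, this is $o(k^{-q})$, because $k^{q+1}\theta^{k/2}\to 0$.

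Combining the three pieces yields $|S_k|\le C' k^{-q}$ for all large $k$, which is the claim. The only step needing care is the observation that exponential decay dominates any polynomial. This handles both the initial block and the lower half of the split, and it is standard, so I expect no real obstacle.
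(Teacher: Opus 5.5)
Your proof is correct and follows essentially the same route as the paper: split the sum at $k/2$, bound the lower half by a multiple of $k\theta^{k/2}=o(k^{-q})$, and bound the upper half by $\frac{2^q}{1-\theta}k^{-q}$ via the geometric series. The only differences are cosmetic: you peel off the finitely many initial terms and work with $|a_i|$, whereas the paper enlarges $C$ so that $a_i\le C i^{-q}$ holds for all $i\ge 1$ and bounds $a_i$ only from above.
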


\begin{proof}
	Let us calculate $S_k = I_1 + I_2$, where
	\begin{align*}
		I_1 := \sum_{i=1}^{\lfloor \frac{k}{2} \rfloor} \theta^{k-i} a_i ~~ \text{and}~~ I_2 := \sum_{i=\lfloor \frac{k}{2} \rfloor + 1}^{k} \theta^{k-i} a_i.
	\end{align*}
	Here $\lfloor \frac{k}{2} \rfloor$ denotes the greatest integer less than or equal to $\frac{k}{2}$.

Now, we consider the following two cases:
	
	(i) For $1 \leq i \leq \lfloor \frac{k}{2} \rfloor$, we have $k - i \geq k - \lfloor \frac{k}{2} \rfloor \geq \frac{k}{2}$. Since $0 < \theta < 1$, it is obvious that $\theta^{k-i} \leq \theta^{\frac{k}{2}}$. By $a_i = \mathcal{O} (\frac{1}{i^q})$, as $i \rightarrow +\infty$, there exists $C > 0 $ such that for any $i \geq 1$, $a_i \leq \frac{C}{i^q} \leq C$. Then,
    \begin{align*}
    	I_1 \leq \sum_{i=1}^{\lfloor \frac{k}{2} \rfloor} \theta^{k-i}   C \leq C\sum_{i=1}^{\lfloor \frac{k}{2} \rfloor} \theta^{\frac{k}{2}} = C \theta^{\frac{k}{2}}  \lfloor k/2 \rfloor \leq C k \theta^{\frac{k}{2}}.
    \end{align*}

	\noindent It is easy to know that for any $q > 0$, $k  \theta^{\frac{k}{2}} = o\left(\frac{1}{k^q} \right),$  as $ k \rightarrow +\infty$. Therefore, $ I_1 = o\left( \frac{1}{k^q} \right), ~\textup{as} ~ k \rightarrow +\infty$.
	
	(ii) For $ \lfloor \frac{k}{2} \rfloor + 1 \leq i \leq k$, let $j = k - i $. Then,
$
		I_2 = \sum_{j=0}^{k - \lfloor \frac{k}{2} \rfloor - 1} \theta^j  a_{k-j}.
$
 Since $k - j=i \geq \lfloor \frac{k}{2} \rfloor + 1 \geq \frac{k}{2}$, we have
	\[
	a_{k-j} \leq \frac{C}{(k - j)^q} \leq \frac{C}{(\frac{k}{2})^q} = \frac{2^q C}{k^q}.
	\]
	Therefore,
	\[
	I_2 \leq \sum_{j=0}^{k - \lfloor \frac{k}{2} \rfloor - 1} \theta^j  \frac{2^q C}{k^q} \leq \frac{2^q C}{1 - \theta}   \frac{1}{k^q},
	\]
	which implies that $I_2 =\mathcal{O} \left( \frac{1}{k^q} \right),~\textup{as} ~ k \rightarrow +\infty$.
	
	Now, combining (i) and (ii), we conclude that $	S_k = \mathcal{O}\left( \frac{1}{k^q} \right),~\textup{as} ~ k \rightarrow +\infty$.
	The proof is complete. \qed
\end{proof}

\section{The continuous-time dynamics approach}

In this section, we investigate the asymptotic properties of the system \eqref{system1} and its perturbed version \eqref{system2}.In the sequel, we need the  following weak assumption which has been used in \cite{hadjisavvas2025heavy, lara2024characterizations}.
 \begin{assumption} \label{assume}
	For $x^{*} = {\rm argmin}_{\mathbb{R}^n} f$, there exists a constant $\kappa > 0$ such that for any   $x\in \mathbb{R}^n$,
	\begin{equation*}
		\langle \nabla f(x ), x  - x^{*} \rangle \geq \kappa (f(x ) - f(x^{*})).
	\end{equation*}
\end{assumption} 

\begin{remark}
\begin{enumerate}
\item[{\rm (i)}] In the case that $f$ is a convex or strongly convex function, Assumption \ref{assume} holds trivially with $\kappa=1$. Thus, Assumption \ref{assume} can be viewed as a generalized convexity assumption. It is worth noting that for differentiable strongly quasiconvex functions, sufficient conditions for Assumption \ref{assume} to hold can be found in \cite[Proposition 10 and Corollary 11]{hadjisavvas2025heavy}.
\item[{\rm (ii)}] For the case of  $0<\kappa\leq1$,  Assumption \ref{assume} reduces to the concept of $\kappa$-quasar convexity introduced in \cite[Definition 1]{hin}, which has been studied deeply because of its applications in machine learning theory.
\item[{\rm (iii)}] In the case that $f$ is a  $\gamma$-strongly quasiconvex function with  $L$-Lipschitz continuous  gradient, it is easy to show that
Assumption \ref{assume} holds trivially with $\kappa=\frac{\gamma}{L}$. We refer the readers to \cite[Page 16]{lara2024characterizations} for more details.
\end{enumerate}
\end{remark}

By virtue of Assumption \ref{assume}, we establish exponential convergence rates for the objective function value error, the trajectory and the velocity vector along the trajectory generated by the system \eqref{system1}.
\begin{theorem} \label{lianxu}
	Let $f: \mathbb{R}^n \rightarrow \mathbb{R}$ be a $\gamma$-strongly quasiconvex function with modulus $\gamma > 0$ and let $x: [t_0, + \infty) \rightarrow \mathbb{R}^{n}$ be a solution of the system \eqref{system1}. Suppose that Assumption \textup{\ref{assume}} holds and $x^{*} = {\rm argmin}_{\mathbb{R}^n} f$. Then, for any
	$\alpha  \in \, \left( 0, \frac{\kappa + 4}{4}\sqrt{\frac{\gamma}{\kappa}}\right]$ and
	$\beta \in \, \left[ 0, \frac{\sqrt{\alpha^2(\kappa+2)^4+16\gamma(\kappa+4)^3}-\alpha(\kappa+2)^2}{4\gamma(\kappa+4)} \right]$, it holds that
	\begin{align*}
		\left\{
		\begin{array}{lll}
			f(x(t)+\beta\dot{x}(t)) - f(x^{*}) = \mathcal{O} (e^{-\frac{\alpha \kappa}{\kappa+4}t}), ~\textup{as}~ t \rightarrow +\infty,\\ [2mm]
			\|x(t) - x^{*}\| = \mathcal{O} (e^{-\frac{\alpha \kappa}{2(\kappa+4)}t}), ~\textup{as}~ t \rightarrow +\infty,\\ [2mm]
			\|\dot{x}(t)\|=\mathcal{O}(e^{-\frac{\alpha \kappa}{2(\kappa+4)}t}), ~\textup{as}~ t \rightarrow +\infty.
		\end{array}
		\right.
	\end{align*}
\end{theorem}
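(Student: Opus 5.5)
The plan is a Lyapunov argument built around the shifted point $y(t):=x(t)+\beta\dot x(t)$, where the gradient in \eqref{system1} is evaluated. With $\lambda:=\frac{\alpha\kappa}{\kappa+4}$, I would consider an energy of the form
\begin{equation*}
\mathcal{E}(t)= f(y(t))-f(x^*)+\frac12\|b(x(t)-x^*)+\dot x(t)\|^2+\frac{c}{2}\|x(t)-x^*\|^2 ,
\end{equation*}
with constants $b>0$, $c\ge 0$ to be chosen; a natural first try is $b=\frac{2\alpha}{\kappa+4}$, with $c$ picked to cancel cross terms. The target is the differential inequality $\dot{\mathcal{E}}(t)+\lambda\mathcal{E}(t)\le 0$, which gives $\mathcal{E}(t)\le \mathcal{E}(t_0)e^{-\lambda(t-t_0)}$. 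Since every term of $\mathcal{E}$ is nonnegative, this yields the rate for $f(x(t)+\beta\dot x(t))-f(x^*)$ at once.

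For the trajectory, if $c>0$ the bound on $\|x(t)-x^*\|^2$ is immediate. Otherwise I would apply Lemma \ref{4} at $y(t)$ to get $\|y(t)-x^*\|^2\le \frac4\gamma(f(y(t))-f(x^*))$. Then $\|\dot x\|$ is controlled by $\|b(x-x^*)+\dot x\|+b\|x-x^*\|$, which gives the $e^{-\lambda t/2}$ rates.

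To compute $\dot{\mathcal{E}}$, write $\frac{d}{dt}f(y)=\langle\nabla f(y),\dot x+\beta\ddot x\rangle$ and substitute $\ddot x=-\alpha\dot x-\nabla f(y)$ everywhere. This produces the terms $-\beta\|\nabla f(y)\|^2$, $(1-\alpha\beta)\langle \nabla f(y),\dot x\rangle$, $-b\langle\nabla f(y),x-x^*\rangle$, together with quadratic terms in $\dot x$ and $x-x^*$. The key manipulation is rewriting $x-x^*=(y-x^*)-\beta\dot x$. Then $-b\langle\nabla f(y),x-x^*\rangle=-b\langle\nabla f(y),y-x^*\rangle+b\beta\langle\nabla f(y),\dot x\rangle$, and Assumption \ref{assume} at $y$ bounds the first piece by $-b\kappa(f(y)-f(x^*))$. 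This supplies the decay of the function-value part of $\mathcal{E}$.

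To get decay of the quadratic part without a Lipschitz gradient, I would use Lemma \ref{4} in the form $f(y)-f(x^*)\ge\frac\gamma4\|y-x^*\|^2$. Part of the surplus $-b\kappa(f(y)-f(x^*))+\lambda(f(y)-f(x^*))$ is converted into a negative multiple of $\|y-x^*\|^2$, and this is expanded via $y-x^*=(x-x^*)+\beta\dot x$. The cross terms $\langle \nabla f(y),\dot x\rangle$ are absorbed by Young's inequality into the good term $-\beta\|\nabla f(y)\|^2$ and into $-\|\dot x\|^2$ terms. When $\beta=0$ the coefficient of $\langle\nabla f(y),\dot x\rangle$ must instead vanish by the choice of $b$, so the case $\beta=0$ may need separate handling.

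The main obstacle will be the final step: making the resulting quadratic form in $(\nabla f(y),\dot x, x-x^*)$ negative semidefinite. I expect the parameter constraints in the statement to arise exactly here. The bound $\alpha\le\frac{\kappa+4}{4}\sqrt{\gamma/\kappa}$ should come from requiring that $\lambda\cdot\frac{b^2}{2}\|x-x^*\|^2$ be dominated by the $\gamma$-term. The upper bound on $\beta$ is the positive root of a quadratic $2\gamma(\kappa+4)\beta^2+\alpha(\kappa+2)^2\beta-2(\kappa+4)^2=0$ (or similar), presumably a determinant/discriminant condition for the $2\times2$ block coupling $\dot x$ and $\nabla f(y)$. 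I would first fix $b$ and $c$ so the $\langle x-x^*,\dot x\rangle$ cross term cancels identically. I would then tune the Young weights and check that the stated $\beta$-range makes the remaining determinant nonpositive, adjusting $c$ if the algebra does not close.
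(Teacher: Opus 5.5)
Your overall plan matches the paper's: the same energy, $b=\frac{2\alpha}{\kappa+4}$, $c=b^2$ (exactly what cancelling the $\langle x-x^*,\dot x\rangle$ term forces), the rate $\lambda=\frac{b\kappa}{2}$, and the same recovery of $\|\dot x\|$. The gap is in the step that produces the quadratic decay, which is too lossy to reach the stated parameter range.

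\textbf{Where the bound is lost.} You use Assumption \ref{assume} in full and pay $\lambda(f(y)-f(x^*))$ for the rate. This leaves $-\frac{b\kappa}{2}(f(y)-f(x^*))$, which Lemma \ref{4} turns into at most $-\frac{b\kappa\gamma}{8}\|y-x^*\|^2$. This must dominate $\lambda b^2\|x-x^*\|^2=\frac{b^3\kappa}{2}\|x-x^*\|^2$; note that both $\frac12\|v\|^2$ and $\frac c2\|x-x^*\|^2$ contribute here, not only $\lambda\frac{b^2}{2}$. The factor $\kappa$ cancels, which gives two limits:
\begin{itemize}
\item At $\beta=0$, where $y=x$ and no splitting is needed, you need $b^2\le\frac{\gamma}{4}$, i.e.\ $\alpha\le\frac{\kappa+4}{4}\sqrt{\gamma}$.
\item For $\beta>0$, the split $\|y-x^*\|^2\ge\frac12\|x-x^*\|^2-\beta^2\|\dot x\|^2$ reduces this to $\alpha\le\frac{\kappa+4}{4}\sqrt{\gamma/2}$.
\end{itemize}
The claimed bound $\frac{\kappa+4}{4}\sqrt{\gamma/\kappa}$, with its $1/\sqrt{\kappa}$, is therefore out of reach already at $\beta=0$ when $\kappa<1$, and with the split when $\kappa<2$. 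This is the typical regime; for instance $\kappa=\gamma/L$ under a Lipschitz gradient.

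Retuning $b$ and $c$, as you suggest, does not rescue it. For $\beta=0$ and any $b>0$, $c\ge 0$, your $\|x-x^*\|^2$ coefficient is at least $\frac{\lambda b^2}{2}-\frac{\gamma}{4}(b\kappa-\lambda)$. This is nonpositive for some $b$ only if $\lambda^2\le\frac{\gamma\kappa^2}{8}$, i.e.\ $\alpha\le\frac{\kappa+4}{2\sqrt2}\sqrt{\gamma}$, which lies strictly below the claimed bound whenever $\kappa<\frac12$.

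\textbf{The missing ingredient.} You need the first-order characterization \eqref{x<y}. Since $f(x^*)\le f(y(t))$, it gives $\langle\nabla f(y),y-x^*\rangle\ge\frac{\gamma}{2}\|y-x^*\|^2$, a quadratic term with no factor $\kappa$. The paper averages this with Assumption \ref{assume}:
\[
\langle\nabla f(y),y-x^*\rangle\ge\frac{\gamma}{4}\|y-x^*\|^2+\frac{\kappa}{2}\big(f(y)-f(x^*)\big).
\]
The function-value part then pays exactly for the rate $\lambda=\frac{b\kappa}{2}$, and the quadratic part is $\frac{b\gamma}{4}\|y-x^*\|^2$. Use the same split together with
\[
\beta(b-\alpha)\langle\nabla f(y),\dot x\rangle\le\beta\|\nabla f(y)\|^2+\frac{\beta(b-\alpha)^2}{4}\|\dot x\|^2 .
\]
Then:
\begin{itemize}
\item the $\|x-x^*\|^2$ coefficient becomes $\frac{b}{4}\big(2b^2\kappa-\frac{\gamma}{2}\big)$, which is nonpositive exactly for the stated range of $\alpha$;
\item the $\|\dot x\|^2$ coefficient becomes $\frac{\alpha\gamma}{2(\kappa+4)}\beta^2+\frac{\alpha^2(\kappa+2)^2}{4(\kappa+4)^2}\beta-\frac{\alpha}{2}$, whose positive root is the stated $\beta$-bound. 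This is precisely the quadratic you guessed.
\end{itemize}

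\textbf{A minor point.} $\beta=0$ needs no separate handling. After substituting $x-x^*=(y-x^*)-\beta\dot x$, the coefficient of $\langle\nabla f(y),\dot x\rangle$ is $\beta(b-\alpha)$, which vanishes automatically at $\beta=0$.
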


\begin{proof}
	 We introduce the energy function $\mathcal{E}: [0, + \infty) \rightarrow \mathbb{R}$ as follows:
	\begin{align} \label{nlfunc}
		\mathcal{E}(t) := f(x(t)+\beta\dot{x}(t)) - f(x^{*}) + \frac{1}{2} \| v(t) \|^2 +\frac{\lambda^2}{2}\|x(t) - x^{*} \|^2,
	\end{align}
	where $v(t) := \lambda (x(t) - x^{*}) + \dot{x}(t)$ and $\lambda:= \frac{2\alpha}{\kappa + 4}$.
	
	We now analyze the time derivative of $\mathcal{E}(t)$. Clearly,
	\begin{align*}
		\dot{v}(t)= \lambda \dot{x}(t)+\ddot{x}(t) = (\lambda - \alpha) \dot{x}(t) - \nabla f(x(t)+\beta\dot{x}(t)).
	\end{align*}
	Thus,
	\begin{equation} \label{v}
		\begin{split}
			\langle v(t), \dot{v}(t) \rangle = & ~ \lambda(\lambda-\alpha)\langle x(t)-x^{*}, \dot{x}(t) \rangle- \lambda \langle \nabla f(x(t)+\beta\dot{x}(t)), x(t)-x^{*} \rangle \\
			& + (\lambda-\alpha) \|\dot{x}(t)\|^2  - \langle \nabla f(x(t)+\beta\dot{x}(t)), \dot{x}(t) \rangle.
		\end{split}
	\end{equation}
	Note that
	\begin{align} \label{ky}
		\begin{split}
			&\langle \nabla f(x(t)+\beta\dot{x}(t)), x (t) - x^{*} \rangle\\ = & ~ \langle \nabla f(x(t)+\beta\dot{x}(t)),x(t)+\beta\dot{x}(t)-x^{*} \rangle - \langle \nabla f(x(t)+\beta\dot{x}(t)),\beta\dot{x}(t) \rangle \\
			\geq & ~  \frac{\gamma}{4} \|x(t)+\beta\dot{x}(t) - x^{*}\|^2 + \frac{\kappa}{2} \big(f(x(t)+\beta\dot{x}(t))-f(x^{*})\big) \\
			& - \langle \nabla f(x(t)+\beta\dot{x}(t)),\beta\dot{x}(t) \rangle,
		\end{split}
	\end{align}
	where the inequality holds due to \eqref{x<y} and Assumption \ref{assume}. Together with \eqref{v} and \eqref{ky}, we get
	\begin{equation} \label{1}
		\begin{split}
			\langle v(t), \dot{v}(t) \rangle
			\leq & ~ \lambda(\lambda-\alpha)\langle x(t)-x^{*}, \dot{x}(t) \rangle - \frac{\lambda\gamma}{4}\|x(t)+\beta\dot{x}(t)-x^{*}\|^2  \\
			&  -\frac{\lambda\kappa}{2}\big(f(x(t)+\beta\dot{x}(t))-f(x^{*})\big) + (\lambda-\alpha)\|\dot{x}(t)\|^2  \\
			& + (\lambda\beta-1) \langle \nabla f(x(t)+\beta\dot{x}(t)), \dot{x}(t) \rangle.
		\end{split}
	\end{equation}

\noindent On the other hand, it is easy to show that
\begin{align*}
	\dot{\mathcal{E}} (t) = & ~ \langle  \nabla f(x(t)+\beta\dot{x}(t)), \dot{x}(t)+\beta\ddot{x}(t) \rangle + \langle v(t), \dot{v}(t) \rangle + \lambda^2 \langle x(t)-x^{*}, \dot{x}(t) \rangle \notag \\
	= & ~ \langle \nabla f(x(t)+\beta\dot{x}(t)), (1-\alpha\beta)\dot{x}(t)-\beta \nabla f(x(t)+\beta\dot{x}(t)) \rangle + \langle v(t), \dot{v}(t) \rangle \notag \\
	& + \lambda^2 \langle x(t)-x^{*}, \dot{x}(t) \rangle.
\end{align*}
Then, together with \eqref{1}, we have
\begin{align*}
	\dot{\mathcal{E}} (t) \leq & ~ \beta(\lambda-\alpha) \langle  \nabla f(x(t)+\beta\dot{x}(t)), \dot{x} (t) \rangle -\beta \|\nabla f(x(t)+\beta\dot{x}(t)) \|^2 \notag \\
	& + \lambda(2\lambda-\alpha) \langle x(t)-x^{*}, \dot{x}(t) \rangle - \frac{\lambda\gamma}{4}\|x(t)+\beta\dot{x}(t)-x^{*}\|^2 \notag \\
	& - \frac{\lambda\kappa}{2}\big(f(x(t)+\beta\dot{x}(t))-f(x^{*})\big) + (\lambda-\alpha) \|\dot{x}(t)\|^2.
\end{align*}

\noindent Taking \eqref{nlfunc} into account, we have
\begin{equation} \label{e(t)}
	\begin{split}
		\dot{\mathcal{E}} (t) \leq & ~ \beta(\lambda-\alpha) \langle  \nabla f(x(t)+\beta\dot{x}(t)), \dot{x} (t) \rangle -\beta \|\nabla f(x(t)+\beta\dot{x}(t)) \|^2 \\
		& + \lambda(2\lambda-\alpha) \langle 	x(t)-x^{*}, \dot{x}(t) \rangle - \frac{\lambda\gamma}{4}\|x(t)+\beta\dot{x}(t)-x^{*}\|^2  \\
		& - \frac{\lambda\kappa}{2} \left( 	\mathcal{E}(t)- \frac{1}{2}\| v(t) \|^2 - \frac{\lambda^2}{2}\|x(t) - x^{*} \|^2 \right) + (\lambda-\alpha) \|\dot{x}(t) \|^2  \\
		= & ~ \beta(\lambda-\alpha) \langle  \nabla 	f(x(t)+\beta\dot{x}(t)), \dot{x} (t) \rangle -\beta \|\nabla f(x(t)+\beta\dot{x}(t)) \|^2  \\
		& + \lambda \left( 2\lambda-\alpha + \frac{\lambda\kappa}{2} \right) \langle 	x(t)-x^{*}, \dot{x}(t) \rangle - \frac{\lambda\gamma}{4}\|x(t)+\beta\dot{x}(t)-x^{*}\|^2  \\
		& - \frac{\lambda\kappa}{2}\mathcal{E}(t) + \frac{\lambda^3\kappa}{2} \|x(t)-x^{*} \|^2 + \left(\lambda - \alpha + \frac{\lambda\kappa}{4}\right) \|\dot{x}(t)\|^2.
	\end{split}
\end{equation}
 Note that
\begin{align*}
	\beta(\lambda-\alpha)\langle \nabla f(x(t)+\beta\dot{x}(t)), \dot{x} (t) \rangle \leq \beta \|\nabla f(x(t)+\beta\dot{x}(t))\|^2 + \frac{\beta(\lambda-\alpha)^2}{4}\|\dot{x}(t)\|^2
\end{align*}
and
\begin{align*}
	-\frac{\lambda\gamma}{4}\|x(t)+\beta\dot{x}(t)-x^{*}\|^2 &\leq -\frac{\lambda\gamma}{4} \left( \frac{1}{2} \|x(t)-x^{*}\|^2 - \beta^2 \|\dot{x}(t)\|^2 \right).
\end{align*}

\noindent Then, it follows from \eqref{e(t)} that
\begin{equation} \label{2}
	\begin{split}
		&\dot{\mathcal{E}} (t) + \frac{\lambda \kappa}{2} \mathcal{E} (t)\\
		\leq & ~ \lambda \left( 2\lambda-\alpha+\frac{\lambda \kappa}{2} \right) \langle x(t)-x^{*}, \dot{x}(t) \rangle + \frac{\lambda}{4}\left(2\lambda^2\kappa-\frac{\gamma}{2}\right) \|x(t)-x^{*} \|^2 \\
		& + \left(   \frac{\lambda\gamma\beta^2}{4} + \frac{\beta(\lambda-\alpha)^2}{4} + \lambda - \alpha + \frac{\lambda\kappa}{4} \right) \|\dot{x}(t) \|^2.
	\end{split}
\end{equation}

Next, we analyze the coefficients of the right side of \eqref{2}. Clearly, from $\lambda = \frac{2 \alpha}{\kappa + 4}$, we have $2\lambda-\alpha+\frac{\lambda \kappa}{2}=0$. Since $0 < \alpha \leq \frac{\kappa + 4}{4}\sqrt{\frac{\gamma}{\kappa}}$, we have
\begin{align*}
	2\lambda^2\kappa-\frac{\gamma}{2} = \frac{8\alpha^2\kappa}{(\kappa+4)^2} - \frac{\gamma}{2} \leq 0.
\end{align*}
Further,   from $0 \leq \beta \leq \frac{\sqrt{\alpha^2(\kappa+2)^4+16\gamma(\kappa+4)^3}-\alpha(\kappa+2)^2}{4\gamma(\kappa+4)}$, we can easily get
\begin{align*}
	 \frac{\lambda\gamma\beta^2}{4} + \frac{\beta(\alpha-\lambda)^2}{4} + \lambda - \alpha + \frac{\lambda\kappa}{4} = \frac{\alpha\gamma}{2(\kappa+4)}\beta^2 + \frac{\alpha^2(\kappa+2)^2}{4(\kappa+4)^2}\beta -\frac{\alpha}{2} \leq 0.
\end{align*}
Therefore, it follows from \eqref{2} that
\begin{equation*}
	\dot{\mathcal{E}} (t) + \frac{\lambda \kappa}{2} \mathcal{E} (t) \leq 0.
\end{equation*}

\noindent Multiplying $e^{\frac{\lambda\kappa}{2}t}$ on both side of the above inequality and integrating it from $t_0$ to $t$ where $t \geq t_0$, we obtain
\begin{equation*}
	\mathcal{E}(t) \leq \mathcal{E} (t_0) \, e^{- \frac{\lambda\kappa}{2} (t-t_0)}.
\end{equation*}

\noindent This together with \eqref{nlfunc} and $\lambda= \frac{2\alpha}{\kappa + 4}$ yields
\begin{align}
	& f(x(t)+\beta\dot{x}(t)) - f(x^{*}) = \mathcal{O} (e^{-\frac{\alpha \kappa}{\kappa+4}t}), ~\textup{as}~ t \rightarrow +\infty, \notag \\
	& \|x(t) - x^{*}\| = \mathcal{O} (e^{-\frac{\alpha \kappa}{2(\kappa+4)}t}), ~\textup{as}~ t \rightarrow +\infty, \label{x-x*}
\end{align}
and
\begin{align*}
	\| \lambda (x(t) - x^{*}) + \dot{x}(t) \|=\mathcal{O}(e^{-\frac{\alpha \kappa}{2(\kappa+4)}t}),~\textup{as}~ t \rightarrow +\infty.
\end{align*}

\noindent By \eqref{x-x*} and $\|\dot{x}(t)\| \leq \|\lambda (x(t) - x^{*}) + \dot{x}(t)\| + \|\lambda(x(t) - x^{*})\|$, we obtain
\begin{align*}
	\|\dot{x}(t)\|=\mathcal{O}(e^{-\frac{\alpha \kappa}{2(\kappa+4)}t}), ~\textup{as}~ t \rightarrow +\infty.
\end{align*}
The proof is complete.\qed
\end{proof}

\begin{remark}
\begin{enumerate}
\item[{\rm (i)}] For  the case where $\alpha=\beta=0$ in  system \eqref{system1}, an exponential convergence rate of the trajectory was established in \cite[Theorem 3.4]{quasiconvex2021} under the condition that $f$ is a strongly quasiconvex function and $\nabla f$ is   Lipschitz continuous. However, Theorem \ref{lianxu} establishes exponential convergence rates of objective function value error, the trajectory and the velocity vector for system \eqref{system1} without relying on the Lipschitz continuity assumption of $\nabla f$. Hence, Theorem \ref{lianxu} can be regarded as a generalization of \cite[Theorem 3.4]{quasiconvex2021}.

\item[{\rm (ii)}]  Although \cite[Theorem 5]{hadjisavvas2025heavy} obtained some exponential convergence results for   inertial dynamical systems with \emph{explicit} Hessian-driven damping   for differentiable strongly quasiconvex functions, Theorem \ref{lianxu} is the first result to establish exponential convergence rates for   systems  incorporating \emph{implicit} Hessian-driven damping, applied to the same class of functions. However, it is worth noting that  the   convergence rate obtained in Theorem \ref{lianxu}  is not as fast as that in \cite[Theorem 5]{hadjisavvas2025heavy}.  The reason appears to be the inclusion of the term $\frac{\lambda^2}{2}\|x(t) - x^{*} \|^2$ in the energy function, which is essential for ensuring the convergence rate of trajectories  to    the minimizer of problem \eqref{min.f}
    in systems with \emph{implicit} Hessian-driven damping.
\end{enumerate}
\end{remark}

  In the sequel, based on Assumption \textup{\ref{assume}}, we analyze the asymptotic properties of the system \eqref{system2}.

\begin{theorem} \label{3.2}
	Let $f: \mathbb{R}^n \rightarrow \mathbb{R}$ be a $\gamma$-strongly quasiconvex function with modulus $\gamma > 0$ and let $x: [t_0, + \infty) \rightarrow \mathbb{R}^{n}$ be a solution of the system \eqref{system2}. Suppose that Assumption \textup{\ref{assume}} holds, $\int_{t_{0}}^{+\infty} \|\epsilon(t)\|^2 dt < +\infty$ and $x^{*} = {\rm argmin}_{\mathbb{R}^n} f$. Then, for any
	$\alpha \in \, \left( 0, \frac{\kappa + 4}{4}\sqrt{\frac{\gamma}{2\kappa}}\right]$ and   $\beta \in \, \left[ 0, \frac{\sqrt{2\alpha^2(\kappa+2)^4+27\gamma(\kappa+4)^3}-\sqrt{2}\alpha(\kappa+2)^2}{9\sqrt{2}\gamma(\kappa+4)} \right]$, the following properties are satisfied:
	\begin{enumerate}
	\item[{\rm (i)}] \textup{(Minimizing properties):
	\begin{align*}
	\lim\limits_{t \to +\infty} f(x(t)+\beta\dot{x}(t)) = f(x^{*})~ \text{and} \lim\limits_{t \to +\infty} \|x(t) - x^{*}\| = \lim\limits_{t \to +\infty} \|\dot{x}(t)\| = 0.
	\end{align*}}
	\item[{\rm (ii)}] \textup{(Convergence rates): Assume further that for some $p>0$, $\|\epsilon(t)\|= \mathcal{O} \left( \frac{1}{t^{p}} \right)$, as $t \rightarrow +\infty$. Then,
	\begin{align*}
		\left\{
		\begin{array}{lll}
			f(x(t)+\beta\dot{x}(t)) - f(x^{*}) = \mathcal{O} \left( \frac{1}{t^{2p}} \right), ~\textup{as}~ t \rightarrow +\infty,\\ [2mm]
			\|x(t) - x^{*}\| = \mathcal{O} \left( \frac{1}{t^{p}} \right), ~\textup{as}~ t \rightarrow +\infty,\\ [2mm]
			\|\dot{x}(t)\|=\mathcal{O} \left( \frac{1}{t^{p}} \right), ~\textup{as}~ t \rightarrow +\infty.
		\end{array}
		\right.
	\end{align*}}
	\end{enumerate}
\end{theorem}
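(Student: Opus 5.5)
We reuse the energy function $\mathcal{E}(t)$ from \eqref{nlfunc} in the proof of Theorem \ref{lianxu}, with the same $v(t)=\lambda(x(t)-x^*)+\dot x(t)$ and $\lambda=\frac{2\alpha}{\kappa+4}$. Under \eqref{system2} we now have $\dot v(t)=(\lambda-\alpha)\dot x(t)-\nabla f(x(t)+\beta\dot x(t))+\epsilon(t)$ and $\beta\ddot x(t)=-\alpha\beta\dot x(t)-\beta\nabla f(x(t)+\beta\dot x(t))+\beta\epsilon(t)$. Repeating the computation leading to \eqref{e(t)}, the derivative $\dot{\mathcal{E}}(t)$ acquires the extra terms
\[
\langle \epsilon(t), v(t)\rangle+\beta\langle \nabla f(x(t)+\beta\dot x(t)),\epsilon(t)\rangle
=\lambda\langle\epsilon(t),x(t)-x^*\rangle+\langle\epsilon(t),\dot x(t)\rangle+\beta\langle \nabla f(x(t)+\beta\dot x(t)),\epsilon(t)\rangle .
\]
We absorb each of these by Young's inequality. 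For the gradient term, we split the available $-\beta\|\nabla f\|^2$: use $\frac{\beta}{2}\|\nabla f\|^2$ against $\beta(\lambda-\alpha)\langle\nabla f,\dot x\rangle$, costing $\frac{\beta(\lambda-\alpha)^2}{2}\|\dot x\|^2$, and the remaining $\frac{\beta}{2}\|\nabla f\|^2$ against $\beta\langle\nabla f,\epsilon\rangle$, costing $\frac{\beta}{2}\|\epsilon\|^2$. For the other two terms we bound $\lambda\langle\epsilon,x-x^*\rangle\le \frac{\lambda\gamma}{16}\|x-x^*\|^2+\frac{4\lambda}{\gamma}\|\epsilon\|^2$ and $\langle\epsilon,\dot x\rangle\le c\|\dot x\|^2+\frac{1}{4c}\|\epsilon\|^2$, with $c$ a fixed fraction of $\alpha$.

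The target is an inequality
\[
\dot{\mathcal{E}}(t)+\frac{\lambda\kappa}{2}\mathcal{E}(t)\le C\|\epsilon(t)\|^2 .
\]
This requires checking that, after the absorptions, the coefficient of $\|x-x^*\|^2$, namely $\frac{\lambda}{4}(2\lambda^2\kappa-\frac{\gamma}{2})+\frac{\lambda\gamma}{16}$ (or a similar variant), is nonpositive, and that the coefficient of $\|\dot x\|^2$ is nonpositive. The cross term $\langle x-x^*,\dot x\rangle$ still vanishes by the choice of $\lambda$. The first condition becomes $8\alpha^2\kappa/(\kappa+4)^2\le\gamma/4$, i.e.\ $\alpha\le\frac{\kappa+4}{4}\sqrt{\frac{\gamma}{2\kappa}}$, which matches the stated range. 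The second is a quadratic inequality in $\beta$, and the positive root of that quadratic should reproduce the stated upper bound on $\beta$, with the constants $27$ and $9\sqrt2$ coming from the specific Young splits. The delicate bookkeeping is choosing the Young weights so that exactly these constants appear; I expect this to be the main obstacle, though any admissible choice gives the qualitative result.

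Given the differential inequality, we multiply by $e^{\mu t}$ with $\mu=\frac{\lambda\kappa}{2}$ and integrate:
\[
\mathcal{E}(t)\le e^{-\mu(t-t_0)}\mathcal{E}(t_0)+C e^{-\mu t}\int_{t_0}^t e^{\mu s}\|\epsilon(s)\|^2ds .
\]
For (i), we apply Lemma \ref{7} with $\varphi(s)=e^{\mu s}$ and $\phi=\|\epsilon\|^2\in L^1$; the integral term then tends to $0$, so $\mathcal{E}(t)\to0$. Each summand of $\mathcal{E}$ is nonnegative, since $f(x(t)+\beta\dot x(t))\ge f(x^*)$ by Lemma \ref{4}, so every summand tends to $0$. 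Hence $f(x+\beta\dot x)\to f(x^*)$, $\|x-x^*\|\to0$ and $\|v\|\to0$, and then $\|\dot x\|\le\|v\|+\lambda\|x-x^*\|\to0$.

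For (ii), we have $\|\epsilon(s)\|^2\le C's^{-2p}$ for large $s$. We estimate $e^{-\mu t}\int_{t_0}^t e^{\mu s}s^{-2p}ds$ by splitting at $t/2$. On $[t_0,t/2]$ the contribution is at most $e^{-\mu t/2}$ times a polynomially bounded quantity, hence $o(t^{-2p})$. On $[t/2,t]$ we have $s^{-2p}\le 2^{2p}t^{-2p}$, so the contribution is $\le\frac{2^{2p}}{\mu}t^{-2p}$; this is the continuous analogue of Lemma \ref{jishu}. Thus $\mathcal{E}(t)=\mathcal{O}(t^{-2p})$, which yields the rate for the function value directly, gives $\|x(t)-x^*\|=\mathcal{O}(t^{-p})$ and $\|v(t)\|=\mathcal{O}(t^{-p})$, and finally $\|\dot x(t)\|=\mathcal{O}(t^{-p})$ via the triangle inequality as in Theorem \ref{lianxu}.
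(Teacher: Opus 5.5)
Your architecture is the paper's: the same energy $\mathcal{E}$, the same three extra $\epsilon$-terms, and Young absorption down to $\dot{\mathcal{E}}+\frac{\lambda\kappa}{2}\mathcal{E}\le C\|\epsilon\|^2$. Lemma \ref{7} then gives (i). For (ii), your split of the integral at $t/2$ is a correct, self-contained replacement for the paper's citation of Attouch et al. The gap is the coefficient check you left open, and your expectation about it is wrong. With your weights, the positive root of the $\|\dot x\|^2$-quadratic does \emph{not} reproduce the stated bound on $\beta$. Since the theorem asserts the conclusion for every $\beta$ in an explicit interval, ``any admissible choice gives the qualitative result'' is not enough.

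Concretely, your splits are half/half on $\beta\|\nabla f\|^2$ and, implicitly, $\|x+\beta\dot x-x^*\|^2\ge\frac12\|x-x^*\|^2-\beta^2\|\dot x\|^2$ as in Theorem \ref{lianxu}. With these, the $\|x-x^*\|^2$ coefficient is fine; it equals the paper's $\frac{\lambda}{4}(2\lambda^2\kappa-\frac{\gamma}{4})$. The coefficient of $\|\dot x\|^2$, however, is
\[
\frac{\alpha\gamma}{2(\kappa+4)}\beta^2+\frac{\alpha^2(\kappa+2)^2}{2(\kappa+4)^2}\beta-\frac{\alpha}{2}+c .
\]
With the natural choice $c=\alpha/4$, this is positive on part of the stated range. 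Take $\kappa=100$, $\gamma=1$, $\alpha=\frac{\kappa+4}{4}\sqrt{\gamma/(2\kappa)}\approx 1.838$, and $\beta$ at the stated upper bound ($\approx 0.420$); the coefficient is about $0.12\,\alpha>0$.

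The paper gets exactly its constants from different weights:
\begin{itemize}
\item $\frac{3\beta}{4}\|\nabla f\|^2+\frac{\beta(\lambda-\alpha)^2}{3}\|\dot x\|^2$ against $\beta(\lambda-\alpha)\langle\nabla f,\dot x\rangle$;
\item $\frac{\beta}{4}\|\nabla f\|^2+\beta\|\epsilon\|^2$ against $\beta\langle\nabla f,\epsilon\rangle$;
\item $\|x+\beta\dot x-x^*\|^2\ge\frac34\|x-x^*\|^2-3\beta^2\|\dot x\|^2$;
\item weight $\frac{\lambda\gamma}{8}$ on $\|x-x^*\|^2$ in the $\lambda\langle\epsilon,x-x^*\rangle$ term, and $c=\frac{\alpha}{4}$.
\end{itemize}
These give the condition $\frac{3\alpha\gamma}{2(\kappa+4)}\beta^2+\frac{\alpha^2(\kappa+2)^2}{3(\kappa+4)^2}\beta\le\frac{\alpha}{4}$. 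Its positive root is precisely the stated bound.

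Alternatively, you can keep your own splits and use that condition, which is equivalent to the stated $\beta$-range, to get
\[
\frac{\alpha\gamma}{2(\kappa+4)}\beta^2+\frac{\alpha^2(\kappa+2)^2}{2(\kappa+4)^2}\beta\le\frac{3}{2}\left(\frac{3\alpha\gamma}{2(\kappa+4)}\beta^2+\frac{\alpha^2(\kappa+2)^2}{3(\kappa+4)^2}\beta\right)\le\frac{3\alpha}{8}.
\]
So any $c\le\alpha/8$ makes your coefficient nonpositive on the whole stated range, at the cost of a larger constant $\frac{1}{4c}$ in $C$. With that step written out, the rest of your argument goes through as proposed.
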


\begin{proof}
	We consider the energy function $\mathcal{E}(t)$ defined in \eqref{nlfunc}. Using a similar argument to  \eqref{e(t)},  we can easily get
	\begin{equation} \label{e(t)2}
		\begin{split}
			\dot{\mathcal{E}} (t) \leq & ~ \beta(\lambda-\alpha) \langle  \nabla f(x(t)+\beta\dot{x}(t)), \dot{x} (t) \rangle -\beta \|\nabla f(x(t)+\beta\dot{x}(t)) \|^2 \\
			& + \lambda(2\lambda-\alpha) \langle 	x(t)-x^{*}, \dot{x}(t) \rangle - \frac{\lambda\gamma}{4}\|x(t)+\beta\dot{x}(t)-x^{*}\|^2  \\
			& - \frac{\lambda\kappa}{2} \left( 	\mathcal{E}(t)- \frac{1}{2}\| v(t) \|^2 - \frac{\lambda^2}{2}\|x(t) - x^{*} \|^2 \right) + (\lambda-\alpha) \|\dot{x}(t) \|^2  \\
			& + \beta \langle \nabla f(x(t)+\beta\dot{x}(t)), \epsilon(t) \rangle  + \lambda \langle x(t)-x^{*}, \epsilon(t) \rangle + \langle \dot{x}(t), \epsilon(t) \rangle \\
			= & ~ \beta(\lambda-\alpha) \langle  \nabla 	f(x(t)+\beta\dot{x}(t)), \dot{x} (t) \rangle -\beta \|\nabla f(x(t)+\beta\dot{x}(t)) \|^2  \\
			& + \lambda \left( 2\lambda-\alpha + \frac{\lambda\kappa}{2} \right) \langle 	x(t)-x^{*}, \dot{x}(t) \rangle - \frac{\lambda\gamma}{4}\|x(t)+\beta\dot{x}(t)-x^{*}\|^2  \\
			& - \frac{\lambda\kappa}{2}\mathcal{E}(t) + \frac{\lambda^3\kappa}{2} \|x(t)-x^{*} \|^2 + \left(\lambda - \alpha + \frac{\lambda\kappa}{4}\right) \|\dot{x}(t)\|^2 \\
			& + \beta \langle \nabla f(x(t)+\beta\dot{x}(t)), \epsilon(t) \rangle  + \lambda \langle x(t)-x^{*}, \epsilon(t) \rangle + \langle \dot{x}(t), \epsilon(t) \rangle.
		\end{split}
	\end{equation}
	Note that
\begin{equation*}
	\left\{\begin{split}
		\beta(\lambda-\alpha)\langle \nabla f(x(t)+\beta\dot{x}(t)), \dot{x} (t) \rangle & \leq \frac{3\beta}{4} \|\nabla f(x(t)+\beta\dot{x}(t))\|^2 + \frac{\beta(\lambda-\alpha)^2}{3}\|\dot{x}(t)\|^2,\\
		-\frac{\lambda\gamma}{4}\|x(t)+\beta\dot{x}(t)-x^{*}\|^2 & \leq -\frac{3\lambda\gamma}{4} \left( \frac{1}{4} \|x(t)-x^{*}\|^2 - \beta^2 \|\dot{x}(t)\|^2 \right),\\
 \beta \langle \nabla f(x(t)+\beta\dot{x}(t)), \epsilon(t) \rangle&\leq \frac{\beta}{4}\|\nabla 	f(x(t)+\beta\dot{x}(t)) \|^2+\beta\|\epsilon(t)\|^2,\\
		   \lambda \langle x(t)-x^{*}, \epsilon(t) \rangle&\leq \frac{\lambda\gamma}{8} \|x(t)-x^{*}\|^2 +\frac{2\lambda}{\gamma}\|\epsilon(t)\|^2,\\
  \langle \dot{x}(t), \epsilon(t) \rangle		& \leq  \frac{\alpha}{4} \|\dot{x}(t)\|^2 +  \frac{1}{\alpha} \|\epsilon(t)\|^2.
	\end{split}\right.
\end{equation*}
	Then, it follows from \eqref{e(t)2} that
	\begin{equation} \label{3}
		\begin{split}
			\dot{\mathcal{E}} (t) + \frac{\lambda \kappa}{2} \mathcal{E} (t)
			\leq & ~ \lambda \left( 2\lambda-\alpha+\frac{\lambda \kappa}{2} \right) \langle x(t)-x^{*}, \dot{x}(t) \rangle \\
			& + \frac{\lambda}{4} \left(2\lambda^2\kappa-\frac{\gamma}{4} \right) \|x(t)-x^{*} \|^2 + \left(\beta+\frac{1}{\alpha}+\frac{2\lambda}{\gamma} \right)\|\epsilon(t)\|^2 \\
			& + \left(  \frac{3\lambda\gamma\beta^2}{4} + \frac{\beta(\lambda-\alpha)^2}{3} +  \lambda - \frac{3\alpha}{4} + \frac{\lambda\kappa}{4} \right) \|\dot{x}(t)\|^2.
		\end{split}
	\end{equation}
	
	Next, we analyze the coefficients of the right side of \eqref{3}. Clearly, from $\lambda = \frac{2 \alpha}{\kappa + 4}$, we have $2\lambda-\alpha+\frac{\lambda \kappa}{2}=0$. Moreover, by $0 < \alpha \leq \frac{\kappa + 4}{4}\sqrt{\frac{\gamma}{2\kappa}}$, we have
	\begin{align*}
		2\lambda^2\kappa-\frac{\gamma}{4} = \frac{8\alpha^2\kappa}{(\kappa+4)^2} - \frac{\gamma}{4} \leq 0.
	\end{align*}
	Further,  from $0 \leq \beta \leq \frac{\sqrt{2\alpha^2(\kappa+2)^4+27\gamma(\kappa+4)^3}-\sqrt{2}\alpha(\kappa+2)^2}{9\sqrt{2}\gamma(\kappa+4)}$ and $\lambda = \frac{2 \alpha}{\kappa + 4}$, we have
	\begin{align*}
	    \frac{3\lambda\gamma\beta^2}{4} + \frac{\beta(\lambda-\alpha)^2}{3} +  \lambda - \frac{3\alpha}{4} + \frac{\lambda\kappa}{4} = \frac{3\alpha\gamma}{2(\kappa+4)}\beta^2 + \frac{\alpha^2(\kappa+2)^2}{3(\kappa+4)^2}\beta - \frac{\alpha}{4} \leq 0.
    \end{align*}
    Thus, it follows from \eqref{3} that
	\begin{align*}
		\dot{\mathcal{E}} (t) + \frac{\lambda \kappa}{2} \mathcal{E} (t)  \leq C\|\epsilon(t)\|^2,
	\end{align*}
	where $C:= \beta + \frac{1}{\alpha} + \frac{2\lambda}{\gamma}$. Multiplying $e^{\frac{\lambda\kappa}{2}t}$ on both side of the above inequality and integrating it from $t_0$ to $t$, where $t \geq t_0$, we obtain
	\begin{align} \label{integrate}
		{\mathcal{E}} (t) \leq {\mathcal{E}} (t_{0}) e^{-\frac{\lambda \kappa}{2}(t-t_{0})} + Ce^{-\frac{\lambda \kappa}{2}t}\int_{t_{0}}^{t} e^{\frac{\lambda \kappa}{2}\tau}\| \epsilon(\tau)\|^2 d\tau.
	\end{align}
	
	(i) Since $\int_{t_{0}}^{+\infty} \|\epsilon(t)\|^2 dt < +\infty$, we deduce from Lemma \ref{7} that $	\lim\limits_{t \to +\infty} {\mathcal{E}} (t) = 0$. This together with \eqref{nlfunc} yields
	\[
	\lim\limits_{t \to +\infty} f(x(t)+\beta\dot{x}(t)) = f(x^{*}) ~\text{and} \lim\limits_{t \to +\infty}  \|x(t)-x^{*}\| = \lim\limits_{t \to +\infty}  \|\dot{x}(t)\| = 0.
	\]
	
	(ii) Clearly, using a similar argument to that in \cite[Theorem 4.1 (ii)]{attouch2021effect}, we can easily deduce from \eqref{integrate} that
	\begin{align*}
		{\mathcal{E}} (t) = {\mathcal{O}}\left( \frac{1}{t^{2p}} \right), ~\textup{as}~ t \rightarrow +\infty.
	\end{align*}
	Then, using a similar argument to the one presented in Theorem \ref{lianxu}, we have
	\begin{align*}
		\left\{
		\begin{array}{lll}
			f(x(t)+\beta\dot{x}(t)) - f(x^{*}) = \mathcal{O} \left( \frac{1}{t^{2p}} \right), ~\textup{as}~ t \rightarrow +\infty,\\ [2mm]
			\|x(t) - x^{*}\| = \mathcal{O} \left( \frac{1}{t^{p}} \right), ~\textup{as}~ t \rightarrow +\infty,\\ [2mm]
			\|\dot{x}(t)\|=\mathcal{O} \left( \frac{1}{t^{p}} \right), ~\textup{as}~ t \rightarrow +\infty.
		\end{array}
		\right.
	\end{align*}
	The proof is complete. \qed
\end{proof}

\begin{remark}
	\begin{enumerate}
		\item[{\rm (i)}] \textup{In \cite{attouch2021effect}, for $\gamma$-strongly convex function $f$, Attouch et al. considered the system \eqref{system2} with the constant damping  $\alpha =2\sqrt{\gamma}$, and demonstrated that, depending on the formulation, different integrability conditions imposed on the error term $\epsilon(t)$ are sufficient to guarantee the convergence rates of the system \eqref{system2}. Therefore, Theorem \ref{3.2} can be viewed as an extension of \cite[Theorem 4.2]{attouch2021effect}, extending the scope from strongly convex to strongly quasiconvex functions.}
		\item[{\rm (ii)}] \textup{In Theorems \ref{lianxu} and \ref{3.2}, if $f$ has an $L$-Lipschitz continuous gradient,   Assumptions \ref{assume} are no longer necessary, see \cite[Page 16]{lara2024characterizations} for more details.}
	\end{enumerate}
\end{remark}

\section{The inertial accelerated algorithm}

In this section,   under the assumption that $\nabla f$ is $L$-Lipschitz continuous, we investigate the asymptotic properties of the inertial accelerated algorithms obtained by a natural explicit temporal discretization of the systems \eqref{system1} and \eqref{system2}.

Firstly, we analyze the inertial accelerated algorithm derived from the system \eqref{system1}. Consider the constant stepsize $\sqrt{s}$ and set $t_{k} = k\sqrt{s}, ~x_{k} = x(t_{k})$. Then,
\begin{align*}
	\frac{x_{k+1}-2x_{k} + x_{k-1}}{s} +  \alpha \frac{x_{k}-x_{k-1}}{\sqrt{s}} + \nabla f \left( x_{k}+\beta \frac{x_{k}-x_{k-1}}{\sqrt{s}} \right) = 0.
\end{align*}

\noindent Equivalently, it can be written as
\begin{align} \label{suanfa1}
	x_{k+1} = x_{k} + (1-\alpha \sqrt{s})(x_{k}-x_{k-1}) - s \nabla f \left( x_{k}+\frac{\beta}{\sqrt{s}}(x_{k}-x_{k-1}) \right).
\end{align}
  Denoting the constant $1-\alpha \sqrt{s}$ and $\frac{\beta}{\sqrt{s}}$ by $\alpha$ and $\beta$, respectively. Solving \eqref{suanfa1} with respect to $x_{k+1}$ gives the following inertial accelerated algorithm:
\begin{align} \label{algo}
	\left\{
	\begin{array}{lll}
		  y_{k} = x_{k} + \alpha(x_{k}-x_{k-1}), \\ [2mm]
		z_{k} = x_{k} + \beta(x_{k}-x_{k-1}),\\ [2mm]
		x_{k+1} = y_{k} - s \nabla f(z_{k}).
	\end{array}
	\right.
	\tag{IAA}
\end{align}

In the following, we analyze the convergence behaviors of the algorithm \eqref{algo}.

\begin{theorem} \label{lisan}
	Let $f: \mathbb{R}^n \rightarrow \mathbb{R}$ be a $\gamma$-strongly quasiconvex function with $L$-Lipschitz continuous gradient and let $\{x_k\}$ be the sequence generated by \eqref{algo}. Suppose that $s = \frac{1}{L}$,
	  $\alpha \in \left(0, \frac{1}{2} \right)$ and
	$\frac{1+\alpha^2-\sqrt{-15\alpha^4 +2\alpha^2 + 1}}{8\alpha} < \beta < \min \left\{ \alpha,~ \frac{1+\alpha^2+\sqrt{-15\alpha^4 +2\alpha^2 + 1}}{8\alpha} \right\}$. Then,  for $x^{*} = {\rm argmin}_{\mathbb{R}^n} f$, it holds that
	\begin{align*}
		\left\{
		\begin{array}{lll}
			f(x_{k}) - f(x^{*}) \leq \mathcal{E}_1(1 - \rho)^{k-1},\\ [2mm]
			\|x_k - x^{*}\|^2 \leq \frac{4\mathcal{E}_1}{\gamma}(1 - \rho)^{k-1},\\ [2mm]
			\|x_k - x_{k-1}\|^2 \leq \frac{2\alpha \mathcal{E}_1}{L\beta}(1 - \rho)^{k-1},
		\end{array}
		\right.
	\end{align*}
	where  $\mathcal{E}_1:= f(x_1) - f(x^{*}) + \frac{L\beta}{2\alpha} \|x_1 - x_0\|^2$ and $\rho := \min \left\{\frac{\frac{1}{2L}\left(1-\frac{\beta}{\alpha} \right)}{\frac{2L}{\gamma^2} + \frac{\beta}{2}}, \frac{\frac{L}{2\alpha}\big((\alpha^2+1)\beta- 4\alpha\beta^2 - \alpha^3 \big)}{\frac{\beta}{2}\left(1+L\beta + \frac{L}{\alpha}\right)} \right\}$.
\end{theorem}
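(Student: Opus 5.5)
The plan is a discrete Lyapunov argument with the energy suggested by $\mathcal{E}_1$:
\[
\mathcal{E}_k := f(x_k) - f(x^{*}) + \frac{L\beta}{2\alpha}\|x_k - x_{k-1}\|^2 .
\]
The target is the one-step contraction $\mathcal{E}_{k+1} \le (1-\rho)\mathcal{E}_k$ for $k\ge 1$. Iterating gives $\mathcal{E}_k \le \mathcal{E}_1(1-\rho)^{k-1}$. The three claimed bounds then follow at once. The first is $f(x_k)-f(x^*)\le \mathcal{E}_k$. The second follows from Lemma \ref{4}, since $\frac{\gamma}{4}\|x_k-x^*\|^2\le f(x_k)-f(x^*)$. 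The third is $\frac{L\beta}{2\alpha}\|x_k-x_{k-1}\|^2\le \mathcal{E}_k$.

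\textbf{Step 1: descent of $f$ along one iteration.} Apply the descent inequality \eqref{L} with $s=1/L$, expanding around $z_k$ rather than $x_k$, because the gradient is evaluated there. Write $x_{k+1}-z_k = (\alpha-\beta)(x_k-x_{k-1}) - \frac1L\nabla f(z_k)$. This gives
\[
f(x_{k+1}) \le f(z_k) - \frac{1}{2L}\|\nabla f(z_k)\|^2 + \frac{L(\alpha-\beta)^2}{2}\|x_k-x_{k-1}\|^2 .
\]
Next compare $f(z_k)$ with $f(x_k)$, again by \eqref{L}:
\[
f(z_k)\le f(x_k)+\beta\langle\nabla f(x_k),x_k-x_{k-1}\rangle+\frac{L\beta^2}{2}\|x_k-x_{k-1}\|^2 .
\]
Alternatively, one can use $f(x_k)\ge f(z_k)+\langle\nabla f(z_k),x_k-z_k\rangle-\frac L2\beta^2\|d_k\|^2$ with $d_k:=x_k-x_{k-1}$, so that the cross term involves $\nabla f(z_k)$ and can be absorbed by Young's inequality into $\frac{1}{2L}\|\nabla f(z_k)\|^2$ and $\|d_k\|^2$. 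We also need $\|x_{k+1}-x_k\|^2=\|\alpha d_k-\frac1L\nabla f(z_k)\|^2\le$ (Young) a combination of $\|d_k\|^2$ and $\|\nabla f(z_k)\|^2$. Collecting terms gives
\[
\mathcal{E}_{k+1} \le f(x_k)-f(x^*) - c_1\|\nabla f(z_k)\|^2 + c_2\|d_k\|^2 ,
\]
with $c_1 \approx \frac{1}{2L}(1-\frac{\beta}{\alpha})$, where the $\frac{\beta}{\alpha}$ loss comes from the velocity term. The second component of $\rho$ should then appear as
\[
\frac{L\beta}{2\alpha}-c_2=\frac{L}{2\alpha}\bigl((\alpha^2+1)\beta-4\alpha\beta^2-\alpha^3\bigr).
\]
This quantity is positive exactly when $4\alpha\beta^2-(1+\alpha^2)\beta+\alpha^3<0$, that is, for $\beta$ between the two roots $\frac{1+\alpha^2\mp\sqrt{-15\alpha^4+2\alpha^2+1}}{8\alpha}$; the condition $\alpha<\frac12$ keeps the discriminant positive. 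Together with $\beta<\alpha$, which makes $c_1>0$, this accounts for all the hypotheses.

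\textbf{Step 2: converting the negative gradient term into energy.} Lemma \ref{PL} gives $\|\nabla f(z_k)\|^2\ge\frac{\gamma^2}{2L}(f(z_k)-f^*)$. This needs to be related back to $f(x_k)-f^*$, and the denominators $\frac{2L}{\gamma^2}+\frac{\beta}{2}$ and $\frac{\beta}{2}(1+L\beta+\frac{L}{\alpha})$ in $\rho$ suggest how. One bounds $f(x_k)-f^*\le \frac{2L}{\gamma^2}\|\nabla f(z_k)\|^2+\frac{\beta}{2}(\cdots)$ and $\frac{L\beta}{2\alpha}\|d_k\|^2 \le \frac{\beta}{2}(1+L\beta+\frac{L}{\alpha})\|d_k\|^2$-type quantities. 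Then $\mathcal{E}_k \le A\|\nabla f(z_k)\|^2 + B\|d_k\|^2$, while $\mathcal{E}_k-\mathcal{E}_{k+1}\ge c_1\|\nabla f(z_k)\|^2+(\frac{L\beta}{2\alpha}-c_2)\|d_k\|^2$. Taking the ratio termwise yields $\mathcal{E}_k-\mathcal{E}_{k+1}\ge\rho\,\mathcal{E}_k$ with $\rho=\min\{c_1/A,\,(\tfrac{L\beta}{2\alpha}-c_2)/B\}$, which matches the stated formula.

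\textbf{Main obstacle.} The hard step is Step 2's upper bound of $f(x_k)-f^*$ in terms of $\|\nabla f(z_k)\|^2$ and $\|d_k\|^2$, since PL controls $f(z_k)$, not $f(x_k)$. I would first try
\[
f(x_k)\le f(z_k)+\langle\nabla f(z_k),x_k-z_k\rangle+\frac L2\beta^2\|d_k\|^2 ,
\]
then apply Young to $-\beta\langle\nabla f(z_k),d_k\rangle\le\frac{\beta}{2}\|\nabla f(z_k)\|^2+\frac{\beta}{2}\|d_k\|^2$. Combined with PL this gives $f(x_k)-f^*\le(\frac{2L}{\gamma^2}+\frac\beta2)\|\nabla f(z_k)\|^2+\frac\beta2(1+L\beta)\|d_k\|^2$, and adding $\frac{L\beta}{2\alpha}\|d_k\|^2$ produces exactly $A$ and $B$. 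The remaining work is to tune the Young weights in Step 1 so that the coefficients come out as stated.
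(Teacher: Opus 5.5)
\textbf{Verdict: there is a gap in Step 1; the missing idea is an exact cancellation.}

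Your skeleton matches the paper: the same energy (the paper's $c=\frac{\beta}{\alpha s}=\frac{L\beta}{\alpha}$), the descent inequality expanded around $z_k$, and your Step 2. That step is exactly the paper's bound $\mathcal{E}_k\le(\frac{2L}{\gamma^2}+\frac{\beta}{2})\|\nabla f(z_k)\|^2+\frac{\beta}{2}(1+L\beta+\frac{L}{\alpha})\|d_k\|^2$, with $d_k=x_k-x_{k-1}$, followed by the termwise ratio. So Step 2 is not the obstacle; Step 1 is.

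You propose to absorb the cross terms $\langle\nabla f(z_k),d_k\rangle$ by Young's inequality and then ``tune the weights''. That cannot produce the stated constants. Any Young split with finite weight charges a strictly positive multiple of $\|\nabla f(z_k)\|^2$. For example, $\frac{c}{2}\|\alpha d_k-\frac1L\nabla f(z_k)\|^2\le\frac{c}{2}\big((1+\nu)\alpha^2\|d_k\|^2+(1+\nu^{-1})L^{-2}\|\nabla f(z_k)\|^2\big)$ gives the gradient coefficient $-\frac{1}{2L}\big(1-\frac{\beta}{\alpha}(1+\nu^{-1})\big)$. This is strictly worse than $-\frac{1}{2L}(1-\frac{\beta}{\alpha})$, and letting $\nu\to\infty$ instead inflates the $\|d_k\|^2$ coefficient. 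So the first entry of $\rho$ is unreachable this way. For $\beta$ close to $\alpha$, which the hypotheses allow, you may not even keep both coefficients negative.

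The fix is that no Young inequality is needed in Step 1. The weight $\frac{L\beta}{2\alpha}$ in the energy is chosen precisely so that the cross terms cancel identically. Expanding exactly,
\[
\frac{L\beta}{2\alpha}\Big\|\alpha d_k-\frac1L\nabla f(z_k)\Big\|^2=\frac{L\alpha\beta}{2}\|d_k\|^2-\beta\langle\nabla f(z_k),d_k\rangle+\frac{\beta}{2\alpha L}\|\nabla f(z_k)\|^2 .
\]
The term $-\beta\langle\nabla f(z_k),d_k\rangle$ cancels the $+\beta\langle\nabla f(z_k),d_k\rangle$ coming from your alternative bound $f(z_k)\le f(x_k)+\beta\langle\nabla f(z_k),d_k\rangle+\frac{L\beta^2}{2}\|d_k\|^2$. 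This is the paper's identity $\alpha-sL(\alpha-\beta)-c\alpha s=0$. Combined with your exact first display, it gives
\[
\mathcal{E}_{k+1}-\mathcal{E}_k\le-\frac{1}{2L}\Big(1-\frac{\beta}{\alpha}\Big)\|\nabla f(z_k)\|^2-\frac{L}{2\alpha}\big((\alpha^2+1)\beta-2\alpha\beta^2-\alpha^3\big)\|d_k\|^2 .
\]
This is slightly better than the paper's bound, which has $4\alpha\beta^2$. The paper bounds $f(z_k)$ by the upper quadratic estimate at $x_k$ and then swaps $\nabla f(x_k)$ for $\nabla f(z_k)$, costing $\frac{3L}{2}\beta^2$ instead of your $\frac{L}{2}\beta^2$.

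Because your $\|d_k\|^2$ coefficient dominates the stated one, Step 2 yields a contraction factor at least $\rho$. Since $\mathcal{E}_k\ge0$, this gives $\mathcal{E}_{k+1}\le(1-\rho)\mathcal{E}_k$. Discard your first variant involving $\nabla f(x_k)$: nothing in the energy decrease controls that term.
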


\begin{proof}
	Given $x^{*} = {\rm argmin}_{\mathbb{R}^n} f$ and $c=\frac{\beta}{\alpha s}$. For all $k \geq 1$, we introduce the   energy sequence
	\begin{align} \label{lsfunc}
		\mathcal{E}_{k} := f(x_{k}) - f(x^{*}) + \frac{c}{2} \|x_{k} - x_{k-1}\|^2.
	\end{align}
Clearly,
	\begin{align} \label{e-e}
		\mathcal{E}_{k+1}-\mathcal{E}_{k} = f(x_{k+1})- f(x_{k}) + \frac{c}{2}(\|x_{k+1} - x_{k}\|^2- \|x_{k} - x_{k-1}\|^2).
	\end{align}
	According to \eqref{algo}, we deduce that
	\begin{align} \label{x-x1}
		x_{k+1} - x_{k} = \alpha(x_{k} - x_{k-1}) - s\nabla f(z_{k}),
	\end{align}
	which follows that
	\begin{align*}
		\|x_{k+1} - x_{k}\|^2 = \alpha^2 \|x_{k} - x_{k-1}\|^2 + s^2\|\nabla f(z_{k})\|^2 - 2\alpha s\langle \nabla f(z_{k}),x_{k} - x_{k-1} \rangle.
	\end{align*}
	This together with \eqref{e-e} yields
	\begin{equation} \label{@@}
		\begin{split}
			\mathcal{E}_{k+1}-\mathcal{E}_{k}
			= & ~  f(x_{k+1}) - f(x_{k}) + \frac{c}{2} \Big((\alpha^2-1) \|x_{k} - x_{k-1}\|^2 \\
			& + s^2\|\nabla f(z_{k})\|^2
			- 2\alpha s \langle \nabla f(z_{k}),x_{k} - x_{k-1} \rangle \Big).
		\end{split}
	\end{equation}
	Moreover, we deduce from \eqref{L} that
	\begin{align}
		f(x_{k+1}) \leq f(z_{k}) + \langle \nabla 	f(z_{k}), x_{k+1} - z_{k} \rangle + \frac{L}{2}\|x_{k+1}-z_{k}\|^2, \label{lk+1}
	\end{align}
	and
	\begin{equation} \label{LL}
		\begin{split}
			f(z_k) & \leq f(x_k) + \langle \nabla f(x_k), z_k-x_k \rangle + \frac{L}{2} \|x_k - z_k\|^2 \\
			& \leq f(x_k) + \langle \nabla f(z_k), z_k-x_k \rangle + \frac{3L}{2} \|x_k - z_k\|^2,
		\end{split}
	\end{equation}
	where the second inequality of \eqref{LL} holds due to
	\[\langle \nabla f(x_k)- \nabla f(z_k), z_k-x_k \rangle \leq \|\nabla f(x_k) - \nabla f(z_k)\| \|z_k-x_k\| \leq L\|z_k-x_k\|^2.
	\]
	
	\noindent Combining \eqref{lk+1} and \eqref{LL}, we have
	\begin{align}\label{&}
		f(x_{k+1}) - f(x_{k}) \leq \langle \nabla f(z_{k}), x_{k+1} - x_{k} \rangle + \frac{L}{2}\|x_{k+1}-z_{k}\|^2 + \frac{3L}{2}\|x_{k}-z_{k}\|^2.
	\end{align}
	By \eqref{algo}, we have
	\begin{equation} \label{x-z}
		\begin{split}
			\|x_{k+1} - z_{k}\|^2 = & ~  \|(\alpha -\beta)(x_k-x_{k-1})-s\nabla f(z_k)\|^2\\
			= & ~  (\alpha-\beta)^2\|x_{k} - x_{k-1}\|^2 + s^2 \|\nabla f(z_{k})\|^2 \\
			& - 2s(\alpha-\beta) \langle \nabla f(z_{k}), x_{k} - x_{k-1} \rangle.
		\end{split}
	\end{equation}
	Together with \eqref{x-x1}, \eqref{x-z} and $z_k - x_k = \beta(x_k-x_{k-1})$, we deduce from \eqref{&} that
	\begin{equation} \label{@}
		\begin{split}
			& f(x_{k+1})-f(x_k) \\
			\leq & \big(\alpha - sL(\alpha-\beta)\big) \langle \nabla f(z_k), x_k-x_{k-1} \rangle + \left(\frac{Ls^2}{2} -s \right) \|\nabla f(z_k)\|^2 \\
			& + \frac{L}{2} \big(3\beta^2+(\alpha-\beta)^2 \big) \|x_k-x_{k-1}\|^2.
		\end{split}
	\end{equation}
	Therefore, combining \eqref{@@} and \eqref{@}, we have
	\begin{align}\label{xishu1}
		\begin{split}
			&\mathcal{E}_{k+1}-\mathcal{E}_{k}\\
			\leq & ~ \big(\alpha-sL(\alpha-\beta)- c\alpha s\big) \langle \nabla f(z_{k}),x_{k} - x_{k-1} \rangle + \left( \frac{Ls^2}{2}+\frac{cs^2}{2}-s \right) \|\nabla f(z_{k})\|^2 \\
			& + \frac{1}{2} \left( 3L\beta^2 + L(\alpha-\beta)^2+c(\alpha^2- 1) \right) \|x_{k} - x_{k-1}\|^2.
		\end{split}
	\end{align}
	
	We now analyze the coefficients of the right side of \eqref{xishu1}. From $s = \frac{1}{L}$ and $ c=\frac{\beta}{\alpha s}$, we obtain
	\begin{align*}
		\alpha-sL(\alpha-\beta)- c\alpha s = 0.
	\end{align*}
	From $0 \leq \beta < \alpha$, $s=\frac{1}{L}$ and $c=\frac{\beta}{\alpha s}$, we have
	  \begin{align}\label{beta00}
		\frac{Ls^2}{2}+\frac{cs^2}{2}-s = \frac{1}{2L}\left(\frac{\beta}{\alpha}-1 \right) < 0.
	\end{align}
	Further, note that
	\begin{equation} \label{beta}
		\begin{split}
			3L\beta^2 + L(\alpha-\beta)^2+c(\alpha^2- 1) = \frac{L}{\alpha}\big(4\alpha\beta^2 - (\alpha^2+1)\beta + \alpha^3 \big).
		\end{split}
	\end{equation}
 According to   $0 < \alpha  < \frac{1}{2}$,  we have
	$
	-15\alpha^4 +2\alpha^2 + 1 > 0.
$
	Then, from $\frac{1+\alpha^2-\sqrt{-15\alpha^4 +2\alpha^2 + 1}}{8\alpha} < \beta < \frac{1+\alpha^2+\sqrt{-15\alpha^4 +2\alpha^2 + 1}}{8\alpha}$, we deduce that \eqref{beta} is negative.  Consequently, by \eqref{xishu1}, \eqref{beta00} and \eqref{beta}, we have 	 \begin{align} \label{rho}
		\mathcal{E}_{k+1}-\mathcal{E}_{k} \leq -\frac{1}{2L}\left(1-\frac{\beta}{\alpha} \right)\|\nabla f(z_{k})\|^2 - \frac{L}{2\alpha}\big((\alpha^2+1)\beta- 4\alpha\beta^2 - \alpha^3 \big)\|x_{k} - x_{k-1}\|^2,
	\end{align}	
where $\frac{1}{2L}\left(1-\frac{\beta}{\alpha} \right)>0$ and $\frac{L}{2\alpha}\big((\alpha^2+1)\beta- 4\alpha\beta^2 - \alpha^3 \big)>0$. 
	
	On the other hand, by \eqref{L}, we have
	\begin{align*}
		f(x_{k}) \leq f(z_{k}) + \langle \nabla f(z_{k}), x_{k} - z_{k} \rangle + \frac{L}{2}\|x_{k}-z_{k}\|^2.
	\end{align*}
	This together with $x_{k} - z_{k} = -\beta(x_{k} - x_{k-1})$ gives
	\begin{align*}
		&f(x_{k})-f(x^{*}) \\ \leq &~ f(z_{k})-f(x^{*}) - \beta \langle \nabla f(z_{k}), x_{k}-x_{k-1} \rangle + \frac{L\beta^2}{2}\|x_{k}-x_{k-1}\|^2 \\
		\leq &~ f(z_{k})-f(x^{*}) + \frac{\beta}{2} \left( \|\nabla f(z_{k})\|^2+\|x_{k}-x_{k-1}\|^2 \right) + \frac{L\beta^2}{2}\|x_{k}-x_{k-1}\|^2\\
		\leq &~ \left( \frac{2L}{\gamma^2} + \frac{\beta}{2} \right) \|\nabla f(z_{k})\|^2+ \left( \frac{\beta}{2} + \frac{L\beta^2}{2}\right) \|x_{k}-x_{k-1}\|^2,
	\end{align*}
	where the last inequality follows from Lemma \ref{PL}. Then, it follows from \eqref{lsfunc} that
  \begin{align} \label{sigma}
		\begin{split}
			\mathcal{E}_{k} \leq &~ \left( \frac{2L}{\gamma^2} + \frac{\beta}{2} \right) \|\nabla f(z_{k})\|^2+ \left( \frac{\beta}{2} + \frac{L\beta^2}{2} + \frac{c}{2} \right) \|x_{k}-x_{k-1}\|^2.
		\end{split}
	\end{align} 
  Let $\rho := \min \left\{\frac{\frac{1}{2L}\left(1-\frac{\beta}{\alpha} \right)}{\frac{2L}{\gamma^2} + \frac{\beta}{2}}, \frac{\frac{L}{2\alpha}\big((\alpha^2+1)\beta- 4\alpha\beta^2 - \alpha^3 \big)}{\frac{\beta}{2}\left(1+L\beta + \frac{L}{\alpha}\right)} \right\}$. Note that $\frac{L}{2\alpha}\big((\alpha^2+1)\beta- 4\alpha\beta^2 - \alpha^3 \big)<\frac{\beta}{2}\left(1+L\beta + \frac{L}{\alpha}\right).$  Then, $0<\rho<1.$ According to \eqref{rho} and \eqref{sigma}, it is easy to show  that
	\begin{align*}
		\mathcal{E}_{k+1} \leq (1 - \rho) \mathcal{E}_{k} \leq (1 - \rho)^k \mathcal{E}_{1}.
	\end{align*}
	By \eqref{lsfunc} and Lemma \ref{4}, we have
$$
		\frac{\gamma}{4}\|x_{k}-x^{*}\|^2 \leq f(x_{k}) - f(x^{*}) \leq (1 - \rho)^{k-1} \mathcal{E}_{1} $$
and
		$$\frac{L\beta}{2\alpha}\|x_k - x_{k-1}\|^2 \leq (1 - \rho)^{k-1} \mathcal{E}_{1}.
$$
	i.e., the linear convergence of algorithm \eqref{algo} has been achieved. \qed
\end{proof}

In the sequel, by temporal discretization of the system \eqref{system2}, we propose the following inertial accelerated algorithm:
\begin{align} \label{algo-error}
	\left\{
	\begin{array}{lll}
		\textcolor[HTML]{00A36C}{ y_{k} = x_{k} + \alpha(x_{k}-x_{k-1})}, \\ [2mm]
		z_{k} = x_{k} + \beta(x_{k}-x_{k-1}),\\ [2mm]
		x_{k+1} = y_{k} - s \nabla f(z_{k}) + s \epsilon_{k}.
	\end{array}
	\right.
	\tag{IAA-Per}
\end{align}
\begin{remark}
 \textup{ Similarly to the algorithm \eqref{algo-error},  we can  introduce several other algorithms in terms of the temporal discretization of the perturbed version of the systems \eqref{chzh} and \eqref{hess}. These algorithms include    (\ref{heavyball}) \cite{polyak1964some} with perturbations (HBM-Per), (\ref{heavyballhess})   \cite{hadjisavvas2025heavy} with perturbations (HBM-H-Per), (\ref{NAG})  \cite{nesterov1983method} with   perturbations (NAG-Per), and (\ref{NAGhess})  \cite{hadjisavvas2025heavy} with   perturbations (NAG-H-Per). Further, we will  examine the performance of these algorithms  through   numerical experiments in Section 5. }
\end{remark}

Now, we analyze the convergence properties of our algorithm \eqref{algo-error}.
\begin{theorem} \label{4.2}
	Let $f: \mathbb{R}^n \rightarrow \mathbb{R}$ be a $\gamma$-strongly quasiconvex function with $L$-Lipschitz gradient and let $\{x_k\}$ be the sequence generated by \eqref{algo-error}. Suppose that $s = \frac{1}{L}$,
	$\alpha \in \left( 0, \frac{1}{2} \right)$,
	$~\frac{1-\sqrt{1-16\alpha^4}}{8\alpha} < \beta < \min \left\{ \frac{\alpha}{2}, \frac{1 + \sqrt{1-16\alpha^4}}{8\alpha} \right\}$
	and for some $p>0$, $\|\epsilon_k\|= \mathcal{O} \left( \frac{1}{k^{p}} \right)$, as $k \rightarrow +\infty$. Then, 	for $x^{*} = {\rm argmin}_{\mathbb{R}^n} f$, it holds that
	\begin{align*}
		\left\{
		\begin{array}{lll}
			f(x_k) - f(x^{*}) = \mathcal{O} \left( \frac{1}{k^{2p}} \right), ~\textup{as}~ k \rightarrow +\infty,\\ [2mm]
			\|x_k - x^{*}\| = \mathcal{O} \left( \frac{1}{k^{p}} \right), ~\textup{as}~ k \rightarrow +\infty,\\ [2mm]
			\|x_k - x_{k-1}\| = \mathcal{O} \left( \frac{1}{k^{p}} \right), ~\textup{as}~ k \rightarrow +\infty.
		\end{array}
		\right.
	\end{align*}

\end{theorem}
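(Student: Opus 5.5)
We follow the proof of Theorem \ref{lisan}, using the same energy sequence $\mathcal{E}_k = f(x_k)-f(x^*)+\frac{c}{2}\|x_k-x_{k-1}\|^2$ with $c=\frac{\beta}{\alpha s}$ and $s=\frac1L$. The goal is a perturbed contraction $\mathcal{E}_{k+1}\le(1-\rho')\mathcal{E}_k + C\|\epsilon_k\|^2$ for some $\rho'\in(0,1)$ and $C>0$. Unrolling gives $\mathcal{E}_{k+1}\le (1-\rho')^k\mathcal{E}_1 + C\sum_{i=1}^k(1-\rho')^{k-i}\|\epsilon_i\|^2$. Since $\|\epsilon_i\|^2=\mathcal{O}(i^{-2p})$, Lemma \ref{jishu} with $\theta=1-\rho'$ and $q=2p$ bounds the sum by $\mathcal{O}(k^{-2p})$, and the geometric term is $o(k^{-2p})$. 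Hence $\mathcal{E}_k=\mathcal{O}(k^{-2p})$. The three claimed rates then follow exactly as at the end of Theorem \ref{lisan}: $f(x_k)-f(x^*)\le\mathcal{E}_k$, Lemma \ref{4} gives $\frac{\gamma}{4}\|x_k-x^*\|^2\le \mathcal{E}_k$, and $\frac{c}{2}\|x_k-x_{k-1}\|^2\le\mathcal{E}_k$.

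To get the perturbed one-step inequality, write $g_k:=\nabla f(z_k)-\epsilon_k$, so that $x_{k+1}-x_k=\alpha(x_k-x_{k-1})-s g_k$ and $x_{k+1}-z_k=(\alpha-\beta)(x_k-x_{k-1})-sg_k$. Repeat the computation \eqref{@@}--\eqref{xishu1} with these identities. Inequality \eqref{L} still gives $f(x_{k+1})\le f(z_k)+\langle\nabla f(z_k),x_{k+1}-z_k\rangle+\frac L2\|x_{k+1}-z_k\|^2$, and \eqref{LL} is unchanged. The only new contributions are cross terms $\langle\nabla f(z_k),\epsilon_k\rangle$, $\langle x_k-x_{k-1},\epsilon_k\rangle$, and $\|\epsilon_k\|^2$, each with a coefficient that is a polynomial in $\alpha,\beta,s,L,c$. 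The coefficient choice $c=\frac{\beta}{\alpha s}$ still cancels the $\langle\nabla f(z_k),x_k-x_{k-1}\rangle$ term, as before. The cross terms with $\epsilon_k$ are then absorbed by Young's inequality. We spend half of the negative $\|\nabla f(z_k)\|^2$ coefficient $\frac{1}{2L}(\frac{\beta}{\alpha}-1)$ and part of the negative $\|x_k-x_{k-1}\|^2$ coefficient on them, with the remainder going into $C\|\epsilon_k\|^2$.

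The main obstacle, and the reason the hypotheses are stronger than in Theorem \ref{lisan} ($\beta<\alpha/2$ and the modified quadratic window), is keeping both coefficients strictly negative after absorption. Concretely, I would split so that what remains is $-\frac{1}{4L}(1-\frac{2\beta}{\alpha})\|\nabla f(z_k)\|^2$, which is positive as a decrease exactly when $\beta<\alpha/2$. The $\|x_k-x_{k-1}\|^2$ coefficient would become $\frac{L}{2\alpha}(4\alpha\beta^2-\beta+\alpha^3)$ or similar, whose negativity is exactly $\frac{1-\sqrt{1-16\alpha^4}}{8\alpha}<\beta<\frac{1+\sqrt{1-16\alpha^4}}{8\alpha}$. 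I would tune the Young weights until the residual quadratic matches this form.

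Finally, to turn the decrease into a contraction, bound $\mathcal{E}_k$ above as in \eqref{sigma}. This uses $f(x_k)-f(x^*)\le f(z_k)-f(x^*)+\frac\beta2(\|\nabla f(z_k)\|^2+\|x_k-x_{k-1}\|^2)+\frac{L\beta^2}{2}\|x_k-x_{k-1}\|^2$ together with Lemma \ref{PL}. The ratio of decrease coefficients to these upper-bound coefficients defines $\rho'\in(0,1)$, and Lemma \ref{jishu} then concludes the argument.
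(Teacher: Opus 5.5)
Your proposal is correct and follows essentially the same route as the paper. Both use the energy $\mathcal{E}_k$ with $c=\frac{\beta}{\alpha s}$, cancel the $\langle\nabla f(z_k),x_k-x_{k-1}\rangle$ term, and apply Young's inequality $\alpha\langle x_k-x_{k-1},\epsilon_k\rangle\le\frac{L\alpha\beta}{2}\|x_k-x_{k-1}\|^2+\frac{\alpha}{2L\beta}\|\epsilon_k\|^2$, which gives exactly your residual $\frac{L}{2\alpha}(4\alpha\beta^2-\beta+\alpha^3)$, then conclude via the upper bound \eqref{sigma} from Lemma \ref{PL} and Lemma \ref{jishu}. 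The only difference is bookkeeping: the paper bounds $-cs^2\langle\nabla f(z_k),\epsilon_k\rangle$ with weight $\frac{cs^2}{2}$, leaving $-\frac{1}{L}\bigl(\frac12-\frac{\beta}{\alpha}\bigr)\|\nabla f(z_k)\|^2$ rather than your $-\frac{1}{4L}\bigl(1-\frac{2\beta}{\alpha}\bigr)\|\nabla f(z_k)\|^2$, and this is immaterial.
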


\begin{proof}
	We consider the energy sequence as in \eqref{lsfunc}, recalling that
	\begin{align} \label{*2}
		\mathcal{E}_{k+1}-\mathcal{E}_{k}
		= f(x_{k+1}) - f(x_{k}) + \frac{c}{2} (\|x_{k+1} - x_{k}\|^2 - \|x_{k} - x_{k-1}\|^2).
	\end{align}
	According to \eqref{algo-error}, we deduce that
	\begin{align} \label{xk-k}
		x_{k+1} - x_{k} = \alpha(x_{k} - x_{k-1}) - s\nabla f(z_{k}) + s\epsilon_{k},
	\end{align}
	which follows that
	\begin{align*}
		\|x_{k+1} - x_{k}\|^2 = & ~ \alpha^2 \|x_{k} - x_{k-1}\|^2 + s^2\|\nabla f(z_{k})\|^2 + s^2 \|\epsilon_{k}\|^2 \\
		& - 2\alpha s \langle \nabla f(z_{k}),x_{k} - x_{k-1} \rangle \\
		& + 2\alpha s\langle x_{k} - x_{k-1},\epsilon_{k} \rangle - 2s^2 \langle \nabla f(z_{k}),\epsilon_{k} \rangle.
	\end{align*}
	This together with \eqref{*2} yields
	\begin{equation}
		\begin{split} \label{k-k2}
			\mathcal{E}_{k+1}-\mathcal{E}_{k} = & ~  f(x_{k+1}) - f(x_{k}) + \frac{c}{2} \Big((\alpha^2 - 1) \|x_{k} - x_{k-1}\|^2 \\
			& + s^2\|\nabla f(z_{k})\|^2 + s^2 \|\epsilon_{k}\|^2 - 2\alpha s \langle \nabla f(z_{k}),x_{k} - x_{k-1} \rangle \\
			& + 2\alpha s \langle x_{k} - x_{k-1},\epsilon_{k} \rangle - 2s^2 \langle \nabla f(z_{k}),\epsilon_{k} \rangle \Big).
		\end{split}
	\end{equation}
	By \eqref{algo-error}, we have
	\begin{equation*}
		\begin{split}
			\|x_{k+1} - z_{k}\|^2 = & ~  (\alpha-\beta)^2\|x_{k} - x_{k-1}\|^2 + s^2 \|\nabla f(z_{k})\|^2 + s^2 \|\epsilon_k\|^2 \\
			& - 2s(\alpha-\beta)\langle \nabla f(z_{k}), x_{k} - x_{k-1} \rangle\\
			& + 2s(\alpha -\beta)\langle x_{k} - x_{k-1}, \epsilon_k \rangle - 2s^2 \langle \nabla f(z_{k}), \epsilon_k \rangle.
		\end{split}
	\end{equation*}
	Combining this with \eqref{xk-k} and $z_k - x_k = \beta(x_k - x_{k-1})$, we deduce from \eqref{&} that
	\begin{equation} \label{@11}
		\begin{split}
			&f(x_{k+1})-f(x_k)\\ \leq & ~ \big(\alpha - sL(\alpha-\beta)\big) \langle \nabla f(z_k), x_k-x_{k-1} \rangle
			+ \left(\frac{Ls^2}{2} -s \right) \|\nabla f(z_k)\|^2 \\
			& + (s-Ls^2) \langle \nabla f(z_k), \epsilon_k \rangle
			+ \left(\frac{3L}{2}\beta^2+\frac{L}{2}(\alpha-\beta)^2 \right) \|x_k-x_{k-1}\|^2 \\
			& + \frac{Ls^2}{2} \|\epsilon_k\|^2 + Ls(\alpha-\beta)\langle x_k - x_{k-1}, \epsilon_k \rangle.
		\end{split}
	\end{equation}
	Therefore, it follows from \eqref{k-k2} that
	\begin{align*}
		&\mathcal{E}_{k+1}-\mathcal{E}_{k}\\
		\leq & ~ \big(\alpha- sL(\alpha-\beta)-c\alpha s\big) \langle \nabla f(z_{k}), x_{k} - x_{k-1} \rangle \\
		& +\left(\frac{Ls^2}{2} + \frac{cs^2}{2}-s\right)\|\nabla f(z_{k})\|^2 + (s-Ls^2-cs^2)\langle \nabla f(z_{k}), \epsilon_{k} \rangle \\
		& + \left(\frac{3L}{2}\beta^2 + \frac{L}{2}(\alpha-\beta)^2 + \frac{c}{2}(\alpha^2 - 1)\right)\|x_{k} - x_{k-1}\|^2  \\
		& + \left(\frac{Ls^2}{2} + \frac{cs^2}{2}\right) \|\epsilon_{k}\|^2 + \big(Ls(\alpha-\beta) +c\alpha s\big) \langle x_{k} - x_{k-1},\epsilon_{k} \rangle.
	\end{align*}
	From $s = \frac{1}{L}$ and $ c=\frac{\beta}{\alpha s}$, we have
	\begin{align*}
		\left\{
		\begin{array}{lll}
			\alpha- sL(\alpha-\beta)-c\alpha s=0,~~ &	\dfrac{Ls^2}{2} + \dfrac{cs^2}{2}-s = \dfrac{cs^2}{2}-\dfrac{s}{2}, \\ [2mm] s-Ls^2-cs^2 = -cs^2, &
			Ls(\alpha-\beta) +c\alpha s=\alpha.
		\end{array}
		\right.
	\end{align*}
	Thus,
	\begin{equation} \label{**2}
		\begin{split}
			&\mathcal{E}_{k+1}-\mathcal{E}_{k}\\
\leq& \left(\frac{cs^2}{2} - \frac{s}{2} \right)\|\nabla f(z_{k})\|^2 -cs^2\langle \nabla f(z_{k}), \epsilon_{k} \rangle \\
			&~ + \left(\frac{3L}{2}\beta^2 + \frac{L}{2}(\alpha-\beta)^2 + \frac{c}{2}(\alpha^2 - 1)\right)\|x_{k} - x_{k-1}\|^2  \\
			&~ + \left(\frac{Ls^2}{2} + \frac{cs^2}{2}\right) \|\epsilon_{k}\|^2 + \alpha \langle x_{k} - x_{k-1},\epsilon_{k} \rangle \\
			\leq& \left(cs^2 - \frac{s}{2} \right)\|\nabla f(z_{k})\|^2 + \left( \frac{Ls^2}{2} + cs^2 + \frac{\alpha}{2L\beta} \right) \|\epsilon_{k}\|^2\\
			& ~+ \left(\frac{3L}{2}\beta^2 + \frac{L}{2}(\alpha-\beta)^2 + \frac{c}{2}(\alpha^2 - 1)+\frac{L}{2}\alpha \beta \right)\|x_{k} - x_{k-1}\|^2,
		\end{split}
	\end{equation}
	where the second inequality holds due to
$
		-cs^2\langle \nabla f(z_{k}), \epsilon_{k} \rangle \leq \frac{cs^2}{2} \|\nabla f(z_{k})\|^2 + \frac{cs^2}{2} \|\epsilon_{k}\|^2
$
	and
$
		 \alpha  \langle x_{k} - x_{k-1}, \epsilon_{k} \rangle \leq \frac{L}{2}\alpha \beta \|x_{k} - x_{k-1}\|^2 + \frac{\alpha}{2L\beta} \|\epsilon_{k}\|^2.
$
	
	We now analyze the coefficients of the right side of \eqref{**2}. From $c=\frac{\beta}{\alpha s}$ and $0 \leq \beta < \frac{\alpha}{2}$, we have
	\begin{align*}
		cs^2 - \frac{s}{2} = \frac{1}{L} \left(\frac{\beta}{\alpha}-\frac{1}{2} \right) < 0.
	\end{align*}
	Further, note that
	\begin{equation} \label{@2}
		\begin{split}
			\frac{3L}{2}\beta^2 + \frac{L}{2}(\alpha-\beta)^2 + \frac{c}{2}(\alpha^2 - 1)+\frac{L}{2}\alpha \beta =\frac{L}{2\alpha}\left(4\alpha\beta^2 - \beta + \alpha^3 \right).
		\end{split}
	\end{equation}
	From $\alpha \in \left( 0, \frac{1}{2}\right)$, we have
$ 1-16\alpha^4 > 0.
$
	Then, from $\frac{1-\sqrt{1-16\alpha^4 }}{8\alpha} < \beta < \frac{1 + \sqrt{1-16\alpha^4 }}{8\alpha}$, we deduce that $4\alpha\beta^2 - \beta + \alpha^3 < 0$, which means that \eqref{@2} is negative. Consequently, we obtain that
	\begin{equation}\label{rho2}\begin{split}
		&\mathcal{E}_{k+1}-\mathcal{E}_{k}\\
 \leq &- \frac{1}{L} \left(\frac{1}{2} - \frac{\beta}{\alpha} \right) \|\nabla f(z_{k})\|^2 - \frac{L}{2\alpha}\left(\beta - 4\alpha\beta^2 - \alpha^3 \right) \|x_{k} - x_{k-1}\|^2 + N\|\epsilon_{k}\|^2,
	\end{split}
\end{equation}
	where $ \frac{1}{L} \left(\frac{1}{2} - \frac{\beta}{\alpha} \right) > 0$ , $\frac{L}{2\alpha}\left(\beta - 4\alpha\beta^2 - \alpha^3 \right) > 0$ and
	$N := \frac{1}{L} \left( \frac{1}{2} + \frac{\beta}{\alpha} + \frac{\alpha}{2\beta} \right)$. 
	
 Let $\sigma := \min \left\{\frac{\frac{1}{L} \left(\frac{1}{2} - \frac{\beta}{\alpha} \right)}{\frac{2L}{\gamma^2} + \frac{\beta}{2}}, \frac{\frac{L}{2\alpha}\left(\beta - 4\alpha\beta^2 - \alpha^3 \right)}{\frac{\beta}{2}\left(1+L\beta + \frac{L}{\alpha}\right)} \right\}$.  Note that $\frac{L}{2\alpha}\big( \beta- 4\alpha\beta^2 - \alpha^3 \big)<\frac{\beta}{2}\left(1+L\beta + \frac{L}{\alpha}\right).$  Then, $0<\sigma<1.$ Together with \eqref{sigma} and \eqref{rho2}, we get
	\begin{align*}
		\mathcal{E}_{k+1} \leq   ~ (1 - \sigma) \mathcal{E}_{k} + N \| \epsilon_{k} \|^2
		\leq  ~ (1 - \sigma)^k \mathcal{E}_{1} + N \sum_{i=1}^{k}(1 - \sigma)^{k-i} \| \epsilon_i\|^2.
	\end{align*} 
	Clearly, according to Lemma \ref{jishu}, we obtain
$
		\mathcal{E}_k = {\mathcal{O}}\left( \frac{1}{k^{2p}} \right), ~\textup{as}~ t \rightarrow +\infty.
$
	It follows that
	\begin{align*}
		\left\{
		\begin{array}{lll}
			f(x_k) - f(x^{*}) = \mathcal{O} \left( \frac{1}{k^{2p}} \right), ~\textup{as}~ k \rightarrow +\infty,\\ [2mm]
			\|x_k - x^{*}\| = \mathcal{O} \left( \frac{1}{k^{p}} \right), ~\textup{as}~ k \rightarrow +\infty,\\ [2mm]
			\|x_k - x_{k-1}\| = \mathcal{O} \left( \frac{1}{k^{p}} \right), ~\textup{as}~ k \rightarrow +\infty.
		\end{array}
		\right.
	\end{align*}
	The proof is complete. \qed
\end{proof}

\begin{remark}
	In \cite[Section 4.2]{attouch2021effect}, for $\gamma$-strongly convex function $f$, Attouch et al.   considered the influence of perturbations on dynamical systems with Hessian-driven damping. However, their investigations were restricted to the convex setting and did not extend to algorithm design. Therefore, the results of Theorem \ref{4.2} appear new in the context of strongly quasiconvex functions.
\end{remark}
\section{Numerical experiments}

In this section, motivated by the examples reported in \cite[Section 5]{hadjisavvas2025heavy} and \cite[Section 6]{lara2024characterizations}, we give some numerical experiments to validate the proposed inertial accelerated algorithm \eqref{algo} and its perturbed version \eqref{algo-error}.

Firstly, to validate the effectiveness of implicit Hessian-driven damping, we compare our algorithm \eqref{algo} with the Heavy Ball method \eqref{heavyball} introduced in \cite{polyak1964some}, the Nesterov's accelerated gradient method \eqref{NAG} introduced in \cite{nesterov1983method}, the Heavy Ball acceleration method with Hessian correction \eqref{heavyballhess} and the Nesterov's accelerated gradient method with Hessian correction \eqref{NAGhess} introduced in \cite{hadjisavvas2025heavy}.
\begin{example}\cite[Example 30]{lara2024characterizations} Consider the following nonconvex optimization problem:
	\begin{align*}
		\min_{x \in \mathbb{R}} f(x) = x^2 + 2\sin^{2}x.
	\end{align*}
	Obviously,  $f$ is a $\gamma$-strongly quasiconvex function with modulus $\gamma=\frac{1}{2}$ and $\nabla f(x) = 2x + 2\sin 2x $ is $L$-Lipschitz continuous with modulus $L=6$. It is easy to know that $x^{*}=0$ and the optimal value $f(x^{*})=0$.
	
	For the algorithms (\ref{heavyball}), (\ref{heavyballhess}), (\ref{NAG}), and (\ref{NAGhess}), we choose   $\alpha=0.7$, $\theta=0.05$ and $\beta=\frac{1}{4L}$. For our algorithm \eqref{algo}, we choose  $\alpha=0.3$, $\beta=0.2$  and $s=\frac{1}{L}=\frac{1}{6}$. The initial points for all algorithms are set to $x_0=x_1=3$.

	Under the above parameters settings, we give the following numerical experiment for these algorithms.

	Figure \ref{fig:12}  displays  the evolution of the objective function value error  $f(x_k)-f(x^{*})$ and iteration error  $\|x_k-x^{*}\|$  under a stopping tolerance of $tol = 10^{-10}$ across different algorithms.

	As illustrated in Figure \ref{fig:12}, we can observe that Algorithms \eqref{heavyballhess} and \eqref{NAGhess}, which are enhanced with explicit Hessian-driven damping, outperform the classical \eqref{heavyball} and \eqref{NAG} in terms of either the convergence rate or  the reduction in oscillations. This trend is further amplified in our algorithm \eqref{algo}, which converges to the optimum faster and exhibits less oscillations.

	\begin{figure}[htbp]
		\centering
		\begin{minipage}{0.45\textwidth}
			\centering
			\includegraphics[width=\linewidth]{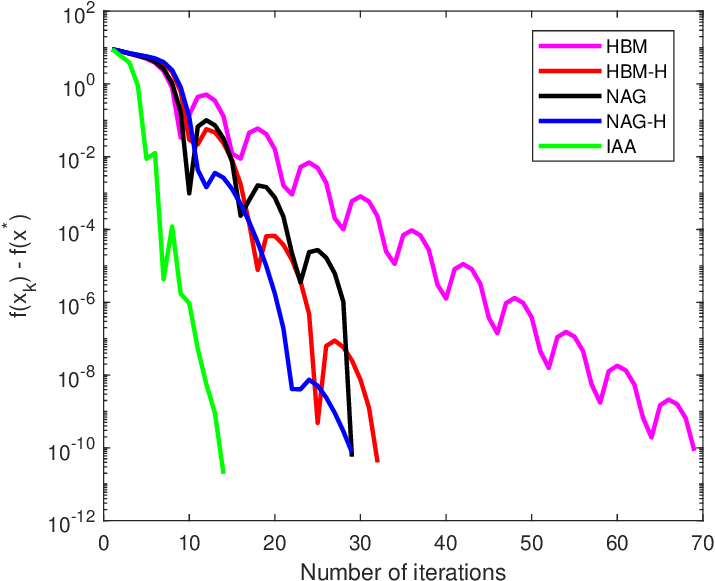}
		\end{minipage}
		\hfill
		\begin{minipage}{0.45\textwidth}
			\centering
			\includegraphics[width=\linewidth]{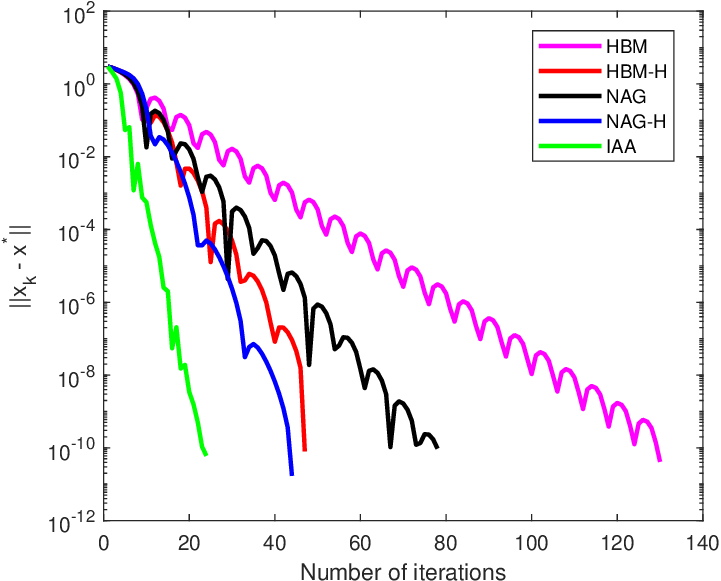}
		\end{minipage}
		\caption{Objective function value error and iteration error for different algorithms.}
		\label{fig:12}
	\end{figure}
	
Figure \ref{fig:34} displays the evolution of successive error  $\|x_k-x_{k-1}\|$ under the stopping tolerance $tol = 10^{-10}$ and iteration trajectory $x_k$ with a maximum iteration limit of $\max\_ iter = 50$ across different algorithms.

	\begin{figure}[htbp]
		\centering
		\begin{minipage}{0.45\textwidth}
			\centering
			\includegraphics[width=\linewidth]{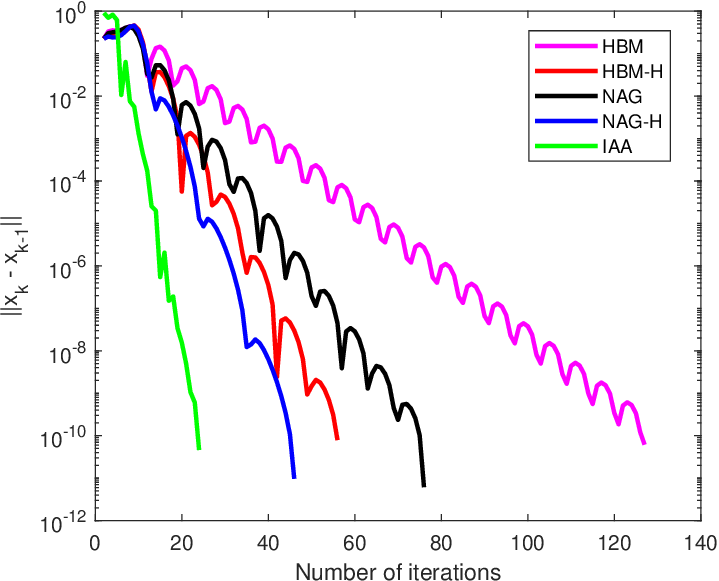}
		\end{minipage}
		\hfill
		\begin{minipage}{0.45\textwidth}
			\centering
			\includegraphics[width=\linewidth]{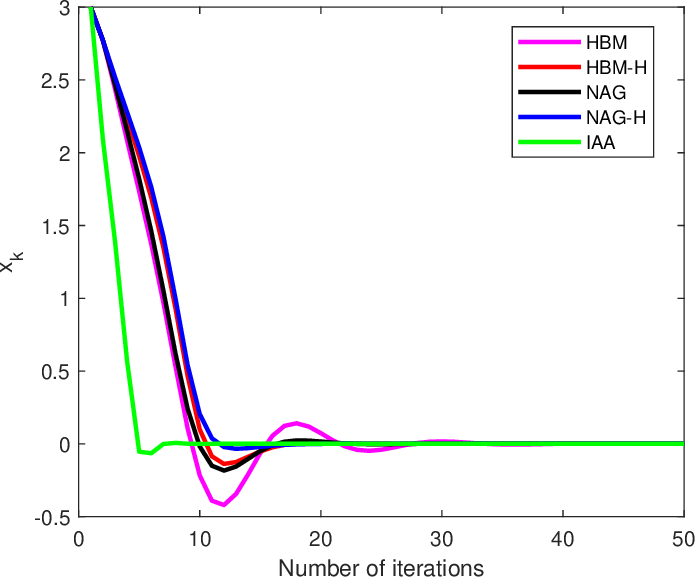}
		\end{minipage}
		\caption{Successive errors and iteration trajectories  for different algorithms.}
		\label{fig:34}
	\end{figure}

Figure \ref{fig:34} further confirms that our algorithm \eqref{algo} converges to the optimum  faster and demonstrates less oscillation.
\end{example}

In the following example, we examine the performance of the algorithms under external perturbations,   comparing our algorithm \eqref{algo-error} with (HBM-Per),  (HBM-H-Per),  (NAG-Per), and   (NAG-H-Per).
\begin{example}
	Consider the following nonconvex optimization problem:
	\begin{align*}
		\min_{(x,y) \in \mathbb{R}^2} f(x,y)=\frac{1}{10} x^2 + \frac{1}{5} y^2 - \arctan\frac{1}{x^2 + 2y^2 + 0.2}.
	\end{align*}
	Clearly, $f$ is a twice differentiable and $\gamma$-strongly quasiconvex function with modulus $\gamma = 0.2$. The 3D graph of the function $f$ is shown in Figure \ref{fig:3d}. Thus, $f$ is a nonconvex function. Moreover, it is easy to show that $\nabla f$ is Lipschitz continuous, $(x^{*}, y^{*})=(0, 0)$ and the optimal value $f(x^{*},y^*) = -\arctan 5  \approx -1.373401$.
	\begin{figure}[htbp]
		\centering
		\begin{minipage}{0.45\textwidth}
			\centering
			\includegraphics[width=\linewidth]{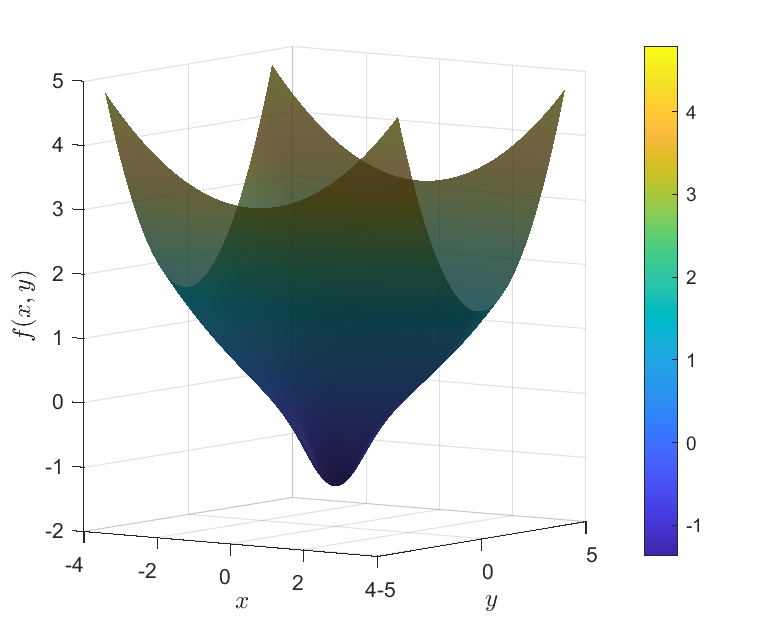}
		\end{minipage}
		\caption{The 3D graph of the function $f$.}
		\label{fig:3d}
	\end{figure}
	
	For the algorithms (HBM-Per), (HBM-H-Per), (NAG-Per) and (NAG-H-Per), we set the parameters as   $\alpha = 0.7$, $\theta = 0.05$, and $\beta = 0.04$. For our algorithm \eqref{algo-error}, the parameters are chosen as  $\alpha = 0.4$, $\beta = 0.15$, and $s = 0.125$. In all algorithms, we set $(x_0,y_0) = (x_1,y_1) = (3,3)$ and noise $\epsilon_k \sim \mathcal{N}(0, \sigma_k^2 I_2)$ with decaying standard deviation $\sigma_k = \frac{0.001}{1+0.01k}$. The stopping criterion is defined as reaching a maximum of $200$ iterations.
	
	Based on the above parameter settings, we carry out the following numerical experiment involving these algorithms.
	
	Figures \ref{fig:1222} and \ref{fig:3422} analyze  the evolution of the objective function value error $f(x_k, y_k)-f(x^{*}, y^{*})$, iteration error  $\|(x_k, y_k)-(x^{*}, y^{*})\|$, gradient norm $\|\nabla f(x_k,y_k)\|$ and iteration trajectory $(x_k,y_k)$.
	
	\begin{figure}[htbp]
		\centering
		\begin{minipage}{0.45\textwidth}
			\centering
			\includegraphics[width=\linewidth]{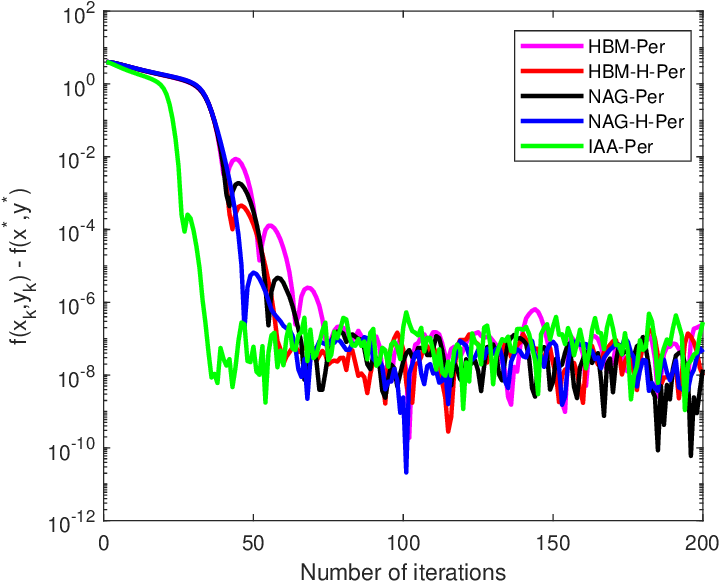}
		\end{minipage}
		\hfill
		\begin{minipage}{0.45\textwidth}
			\centering
			\includegraphics[width=\linewidth]{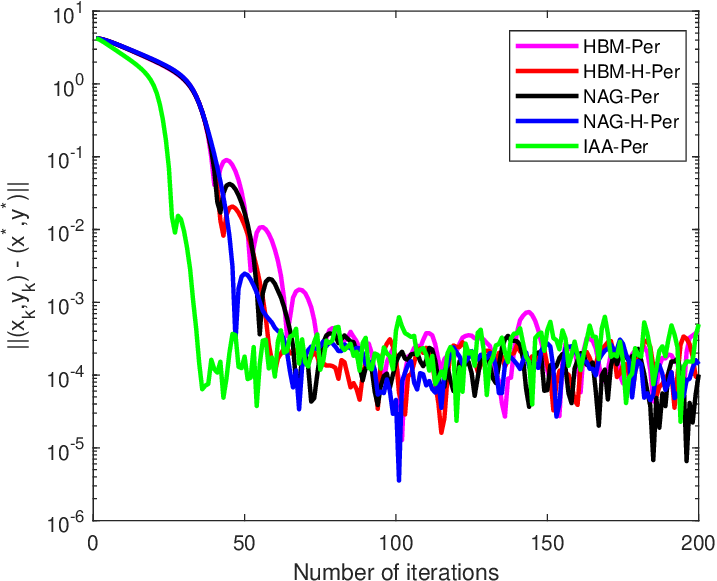}
		\end{minipage}
		\caption{Objective function value error and iteration error for different algorithms.}
		\label{fig:1222}
	\end{figure}
	
	\begin{figure}[htbp]
		\centering
		\begin{minipage}{0.45\textwidth}
			\centering
			\includegraphics[width=\linewidth]{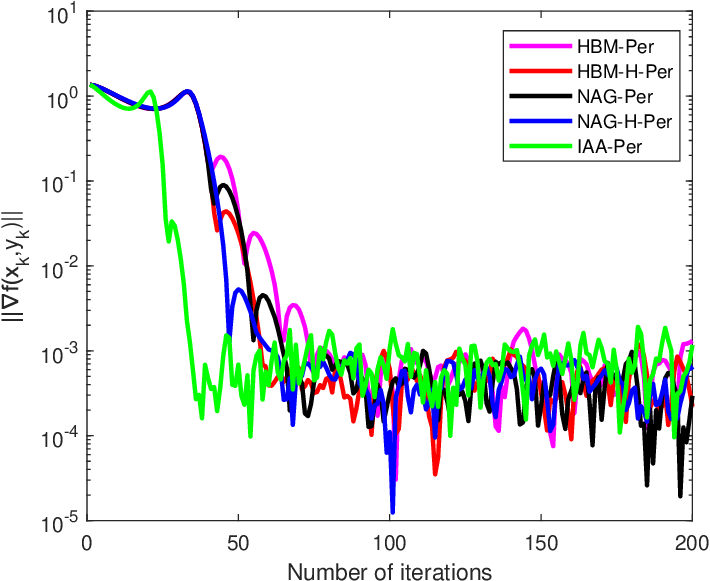}
		\end{minipage}
		\hfill
		\begin{minipage}{0.45\textwidth}
			\centering
			\includegraphics[width=\linewidth]{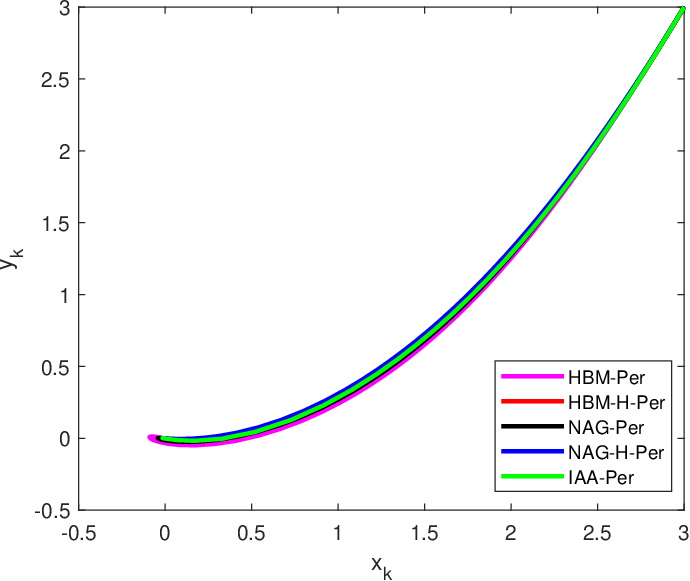}
		\end{minipage}
		\caption{Gradient norm and iteration trajectory for different algorithms.}
		\label{fig:3422}
	\end{figure}
	
	As shown in Figures \ref{fig:1222} and \ref{fig:3422}, our algorithm \eqref{algo-error} outperforms the other four algorithms in reducing oscillations.
	
\end{example}

\section{Conclusions}
In this paper, for strongly quasiconvex function, we establish exponential convergence rates for the objective function value error, the trajectory and the velocity vector along the trajectory generated by the inertial dynamical system with implicit Hessian-driven damping. We also investigate the dynamical system with exogenous additive perturbations and obtain its convergence results. Applying natural explicit discretization to the continuous dynamical system, we develop the corresponding inertial accelerated algorithm and its perturbed version. Through numerical experiments, it is observed that our algorithms achieve faster convergence and exhibit less oscillations.

Although we present some new results in applying dynamical system approaches to strongly quasiconvex optimization problems, there are some remaining questions to be investigated in the future. For example, whether dynamical systems with asymptotic vanishing damping can be employed to solve such problems. And it is also important to consider how these methods can be extended to handle nonconvex optimization problems with linear equality constraints.

\section*{Funding}
{\small This research is supported by the Natural Science Foundation of Chongqing (CSTB2024NSCQ-MSX0651) and the Team Building Project for Graduate Tutors in Chongqing (yds223010).}

\section*{Data availability}
{\small The authors confirm that all data generated or analysed during this study are included in this article.}
\section*{Declaration}
{\small $\mathbf{Conflict ~of~ interest}$ No potential conflict of interest was reported by the authors.}

\bibliographystyle{plain}

\end{document}